\newtheorem{prop}{Proposition}[section]
\newtheorem{theo}[prop]{Theorem}
\newtheorem{coro}[prop]{Corollary}
\newtheorem{lemm}[prop]{Lemma}
\newtheorem{lem}[prop]{Lemma}
\newtheorem*{lemm*}{Lemma}
\theoremstyle{definition}
\newtheorem{empt}[prop]{}
\newtheorem{dfn}[prop]{Definition}
\newtheorem{rem}[prop]{Remark} 
\newtheorem*{nota*}{Notation}
\newtheorem*{rem*}{Remark}
\theoremstyle{thm}
\newtheorem{thm}[prop]{Theorem}
\newtheorem*{thm*}{Theorem}
\newtheorem*{lem*}{Lemma}
\newtheorem*{cor*}{Corollary}
\newtheorem*{prop*}{Proposition}
\theoremstyle{dfn}
\newtheorem*{dfn*}{Definition}
\numberwithin{equation}{prop}
\newcommand{\riso}{ \overset{\sim}{\longrightarrow}\, }
\newcommand{\liso}{ \overset{\sim}{\longleftarrow}\, }
\newcommand{\Spec}{\mathrm{Spec}\,}
\newcommand{\FF}{{\mathcal{F}}}
\newcommand{\E}{{\mathcal{E}}}
\newcommand{\G}{{\mathcal{G}}}
\renewcommand{\H}{{\mathcal{H}}}
\newcommand{\D}{{\mathcal{D}}}
\newcommand{\K}{{\mathcal{K}}}
\renewcommand{\O}{{\mathcal{O}}}
\newcommand{\V}{\mathcal{V}}
\newcommand{\A}{\mathbb{A}}
\renewcommand{\P}{\mathbb{P}}
\newcommand{\DD}{\mathbb{D}}
\newcommand{\Q}{\mathbb{Q}}
\newcommand{\Z}{\mathbb{Z}}
\begin{document}

\title{Hard Lefschetz Theorem in $p$-adic cohomology}
\author{Daniel Caro} 

\date{}

\maketitle

\begin{abstract}
In this paper, we give a $p$-adic analogue of 
the Hard Leftschetz Theorem. 
\end{abstract}

\date
\tableofcontents

\bigskip

\section*{Introduction}
The main purpose of this paper is to check a $p$-adic analogue of 
the Hard Leftschetz Theorem. 
We have followed the proof in the $l$-adic context written in 
\cite[IV.4.1]{KW-Weilconjecture} (we can compare with the proof given in \cite{BBD}). 
Two main ingredients of the proof are the semi-simplicity of a pure arithmetic $\D$-module (see \cite[4.3.1]{Abe-Caro-weights})
and the construction and the properties of the trace map given in \cite[1.5]{Abe-LanglandsIsoc}.
Then, this paper can be considered as a natural continuation of these works.
We follow here their terminology and notation.

Let us describe the contents of the paper.
In the first  chapter, we study 
the properties of the Serre subcategory consisting of relative constant objects.
In the second chapter, we introduce the $p$-adic analogue of 
the Brylinsky-Radon transform and use its properties to prove the Hard Leftschetz Theorem. 
We have tried to write the proofs only when the $p$-adic analogues were not straightforward. 
Finally, in the last chapter, for the sake of completeness, 
we check 
the inversion formula satisfied by Radon transform. 

\bigskip

In this paper, we fix a complete discrete valuation ring $\V$ of mixed characteristic $(0,p)$. 
Its residue field is denoted by $k$, and assume it to be perfect. We also
suppose that there exists a lifting $\sigma\colon \V\xrightarrow{\sim}\V$ of the $s$-th
Frobenius automorphism of $k$. We put  $q:=p^s$, $K:=\mathrm{Frac}(\V)$.
We fix an isomorphism $\iota\colon\overline{\mathbb{Q}}_p\cong\mathbb{C}$. 

\subsection*{Acknowledgment}
The author would like to thank Tomoyuki Abe for 
many explanations concerning the trace maps and their properties
and Weizhe Zheng for his interest and questions concerning this $p$-adic analogue.

\begin{nota*}
We will keep the notation concerning cohomological operators as in \cite[1.1]{Abe-Caro-weights}.
We will also use the categories defined in \cite[1.5]{Abe-Caro-BEq}.
We recall in this paragraph some of the construction.
Let $X$ be a realizable variety.
Let $\mathrm{Hol}_F(X/K)'$ be the subset of $\mathrm{Ob}(\mathrm{Ovhol}(X/K))$ which
can be endowed with some $s'$-th Frobenius structure for some integer $s'$
which is a multiple of $s$, and let $\mathrm{Hol}_F(X/K)$ be the thick
abelian subcategory generated by $\mathrm{Hol}_F(X/K)'$ in
$\mathrm{Ovhol}(X/K)$. We denote by
$D^{\mathrm{b}}_{\mathrm{hol},F}(X/K)$ the triangulated full subcategory of
$D^{\mathrm{b}}_{\mathrm{ovhol}}(X/K)$ such that the cohomologies are in
$\mathrm{Hol}_F(X)$. 
For any integer $n$, we can extend the twist of Tate over $D^{\mathrm{b}}_{\mathrm{hol},F}(X/K)$:
by definition the twist $(n)$ is the identity (and then the forgetful functor $F\text{-}D^{\mathrm{b}}_{\mathrm{hol},F}(X/K)
\to D^{\mathrm{b}}_{\mathrm{hol},F}(X/K)$ commutes with the twist of Tate). 
For simplicity and if there is no risk of confusion with the notion of holonomicity of Berthelot, 
we will write $D^{\mathrm{b}}_{\mathrm{hol}}(X/K)$ instead of 
$D^{\mathrm{b}}_{\mathrm{hol},F}(X/K)$
and 
$\mathrm{Hol}(X/K)$
instead of 
$\mathrm{Hol}_F(X/K)$.
With this notation, we get 
$F\text{-}D^{\mathrm{b}}_{\mathrm{ovhol}}(X/K)= F\text{-}D^{\mathrm{b}}_{\mathrm{hol}}(X/K)$.
Be careful that this notation is a bit misleading since in general we do not know even with Frobenius structures if the notion of holonomicity of Berthelot
and the notion of overholonomicity coincides. 

\end{nota*}

\section{Constant objects with respect to smooth $\P ^{d}$-fibration morphisms}

\begin{empt}
Let $g\colon U \to T$ be a morphism of realizable varieties. 
Let $\FF,\,\G \in (F \text{-})D ^\mathrm{b} _\mathrm{hol}  (T/K)$.
We have the morphisms 
\begin{equation}
\label{bottom-II.11firstdiag}
\epsilon _g \colon g ^{!} (\FF) \otimes  g ^{+} (\G)
\underset{\mathrm{adj}}{\longrightarrow}
g ^{!} g _{!} (g ^{!} (\FF) \otimes  g ^{+} (\G))
\underset{\mathrm{proj}}{\riso}
g ^{!} (g _{!} g ^{!} (\FF) \otimes  \G)
\underset{\mathrm{adj}}{\longrightarrow}
g ^{!} (\FF \otimes  \G)
\end{equation}
where $\mathrm{proj}$ (resp. $\mathrm{adj}$) means the projection isomorphism constructed in \cite[A.6]{Abe-Caro-weights}
(resp. the adjunction isomorphism corresponding to the adjoint functors $(g _! , g ^!)$).
Since the projection isomorphisms and adjunction isomorphisms are transitive, then so is for 
$\epsilon _g$ i.e., for any $h \colon V \to T$ morphism of realizable varieties, the diagram
\begin{equation}
\label{trans-epsilon}
\xymatrix{
{h ^{!} g ^{!} (\FF) \otimes  h ^{+} g ^{+} (\G)} 
\ar[d] ^-{\sim}
\ar[r] ^-{\epsilon _h}
& 
{h ^{!} ( g ^{!} (\FF) \otimes  g ^{+} (\G)) } 
\ar[r] ^-{h ^{!} \epsilon _g}
& 
{h ^{!} g ^{!} (\FF \otimes  \G)} 
\ar[d] ^-{\sim}
\\ 
{(g \circ h) ^{!} (\FF) \otimes (g \circ h)  ^{+} (\G)} 
\ar[rr] ^-{\epsilon _{g \circ h}}
&& 
{(g \circ h) ^{!} (\FF \otimes \G)} 
}
\end{equation}
is commutative.

\end{empt}

\begin{empt}
[Poincaré duality]
\label{notaPoincaréduality}
Let $f\colon X \to S$ be a smooth equidimensional morphism of relative dimension $d$ of realizable varieties. 
T. Abe has checked 	(see \cite[1.5.13]{Abe-LanglandsIsoc}) that the morphism 
\begin{equation}
\label{dual-smooth-morph}
\theta _{f}\colon f ^{+} [d]  \to f ^{!}[-d] (-d),
\end{equation}
which is induced by adjunction
from the trace map $\mathrm{Tr}\colon f _! f ^{+} [2d] (d) \to id$,
is an isomorphism of t-exact functors. 
This isomorphism satisfies several compatibility properties (see  \cite[1.5]{Abe-LanglandsIsoc}), e.g. it is transitive. 
\end{empt}

\begin{empt}
We keep the notation of \ref{notaPoincaréduality}.
Let $\FF,\,\G \in (F \text{-})D ^\mathrm{b} _\mathrm{hol}  (S/K)$.
The diagram below 
\begin{equation}
\notag
\xymatrix{
*+[F]{f ^{+} (\FF \otimes \G)[2d] (d) } 
\ar[r] ^-{\sim}
\ar[d] ^-{\mathrm{adj}}
& 
*+[F]{f ^{+} (\FF[2d] (d) ) \otimes f ^{+} (\G)} 
\ar[r] ^-{\mathrm{adj}}
\ar[d] ^-{\mathrm{adj}}
& 
{f ^{!} f _{!} f ^{+} (\FF[2d] (d) )\otimes  f ^{+} (\G) } 
\ar[r] ^-{\mathrm{Tr} \otimes Id}
\ar[d] ^-{\mathrm{adj}}
& 
*+[F]{f ^{!} (\FF) \otimes  f ^{+} (\G)} 
\ar[ddd] ^-{\epsilon _f}
\\ 
{f ^{!} f _{!} f ^{+} (\FF \otimes \G)[2d] (d) } 
\ar[r] ^-{\sim}
\ar[dd] ^-{\mathrm{Tr}}
& 
{f ^{!} f _{!}  ( f ^{+} (\FF [2d] (d) ) \otimes f ^{+} (\G) )} 
\ar[r] ^-{\mathrm{adj}}
\ar[d] ^-{\sim} _{\mathrm{proj}}
& 
{f ^{!} f _{!}  (f ^{!} f _{!} f ^{+} (\FF [2d] (d) )\otimes  f ^{+} (\G))} 
\ar[d] ^-{\sim} _{\mathrm{proj}}
& 
{} 
\\
{} 
& 
{f ^{!} ( f _{!} f ^{+} (\FF[2d] (d)) \otimes \G ) } 
\ar[r] ^-{\mathrm{adj}}
\ar@{=}[rd] ^-{}
& 
{f ^{!} (f _{!}  f ^{!} f _{!} f ^{+} (\FF [2d] (d)) \otimes  \G) } 
\ar[d] ^-{\mathrm{adj}}
& 
{}
\\
*+[F]{f ^{!} (\FF \otimes \G)} 
& 
{} 
& 
{f ^{!} ( f _{!} f ^{+} (\FF [2d] (d)) \otimes \G )} 
\ar[ll] ^-{\mathrm{Tr} \otimes Id}
\ar[r] ^-{\mathrm{Tr} \otimes Id}
& 
{f ^{!} (\FF \otimes \G)} 
}
\end{equation}
is commutative. 
Indeed, the pentagon  is commutative from \cite[1.5.1.Var5]{Abe-LanglandsIsoc}.
The other parts of the diagram are commutative by definition and functoriality. 
Hence we get the canonical commutative square: 
\begin{equation}
\label{II.11firstdiag}
\xymatrix{
{f ^{+} (\FF \otimes \G)[2d] (d) } 
\ar[r] ^-{\sim}
\ar[d] ^-{\sim} _-{\theta _f}
& 
{f ^{+} (\FF[2d] (d)) \otimes f ^{+} (\G)} 
\ar[d] ^-{\sim} _-{\theta _f \otimes id}
\\ 
{f ^{!} (\FF \otimes \G)} 
& 
{f ^{!} (\FF) \otimes  f ^{+} (\G).} 
\ar[l] ^-{\epsilon _f}
}
\end{equation}
This implies that the bottom morphism of \ref{II.11firstdiag} is also an isomorphism. 
\end{empt}

\begin{dfn}
Let $f\colon X\to S$ be an equidimensional smooth morphism of relative dimension $d$ of realizable varieties. 
\begin{enumerate}
\item The objects of the essential image of the functor
$f ^{+} \colon 
(F \text{-})D ^\mathrm{b} _\mathrm{hol}  (S/K)
\to 
(F \text{-})D ^\mathrm{b} _\mathrm{hol}  (X/K)$
are called constant (with respect to $f$). 

\item The objects of the essential image of the functor
$f ^{+}[d]
\colon 
(F \text{-})\mathrm{Hol}   (S/K)
\to 
(F \text{-})\mathrm{Hol}  (X/K)$
are called constant (with respect to $f$). 
We denote by 
$f ^{+}[d](F \text{-})\mathrm{Hol}   (S/K)$ its essential image. 
\end{enumerate}

\end{dfn}

\begin{empt}
\label{def-K_X}
Let $X$ be a realizable $k$-variety and
$ p _X \colon X \to \Spec k $ be the structural morphism. 
We denote by $ K _X : = p _{X} ^{+} (K)$ the constant coefficient of $X$. 
The complex $K _X$ is the $p$-adic analogue of the constant sheaf $\Q _{l}$ over $X$.
Let $\E \in D ^\mathrm{b} _\mathrm{hol}  (X/K)$.
We notice that 
$K _{X} \otimes \E \riso \E$.

\end{empt}

\begin{prop}
\label{KW-II.11.2}
Let $u \colon Y \hookrightarrow X$ be a closed immersion of pure codimension $r$ in $X$ of smooth realizable $k$-varieties.
Let $\E \in (F \text{-})D ^\mathrm{b} _\mathrm{hol}  (X/K)$.
\begin{enumerate}
\item There exists a natural functorial morphism of $(F \text{-})D ^\mathrm{b} _\mathrm{hol}  (Y/K)$ of the form
\begin{equation}
\label{KW-II.11.2-part1}
\theta _u \colon u ^{+} (\E) \to u ^{!} (\E) [2r](r). 
\end{equation}
\item 
\label{KW-II.11.2.2}
If (locally on $X$) the complex $\E$ is constant with respect to a smooth equidimensional morphism
$f \colon X \to S$ of realizable varieties
such that $f \circ u$ is also smooth,
then $\theta _u$ is an isomorphism. 
\end{enumerate}
 \end{prop}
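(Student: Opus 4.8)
The plan is to build $\theta _u$ out of the maps $\epsilon _g$ of \eqref{bottom-II.11firstdiag} together with a purity isomorphism for the constant object, and then to establish isomorphy on constant complexes by comparing $\theta _u$ with Poincaré duality for $f$ and for $f\circ u$.

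\textbf{Construction (part 1).} Working on connected components we may assume $X$, and hence $Y$, equidimensional; put $d _X=\dim X$ and $d _Y=\dim Y=d _X-r$. As $X,Y$ are smooth over $k$, the structural morphisms $p _X\colon X\to\Spec k$ and $p _Y\colon Y\to\Spec k$ are smooth equidimensional, so Poincaré duality \eqref{dual-smooth-morph} yields $\theta _{p _X}\colon p _X^{+}[d _X]\riso p _X^{!}[-d _X](-d _X)$ and similarly for $p _Y$. Substituting $p _Y^{+}=u^{+}p _X^{+}$ and $p _Y^{!}=u^{!}p _X^{!}$ and evaluating at $K$, the transitivity of Poincaré duality along $p _Y=p _X\circ u$ produces a purity isomorphism
\[
\mathfrak{p} _u\colon u^{!}(K _X)\riso K _Y[-2r](-r).
\]
For arbitrary $\E$, I then specialise $\epsilon _u$ of \eqref{bottom-II.11firstdiag} to $\FF=K _X$, $\G=\E$; invoking $K _X\otimes\E\riso\E$, $K _Y\otimes u^{+}(\E)\riso u^{+}(\E)$ (see \ref{def-K_X}) and $\mathfrak{p} _u$, the map $\epsilon _u\colon u^{!}(K _X)\otimes u^{+}(\E)\to u^{!}(\E)$ becomes
\[
u^{+}(\E)[-2r](-r)\liso u^{!}(K _X)\otimes u^{+}(\E)\overset{\epsilon _u}{\longrightarrow}u^{!}(\E),
\]
and I define $\theta _u$ of \eqref{KW-II.11.2-part1} as the resulting morphism $u^{+}(\E)\to u^{!}(\E)[2r](r)$. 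Functoriality in $\E$ is inherited from that of $\epsilon _u$ in its second slot.

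\textbf{Isomorphy on constant complexes (part 2).} Being an isomorphism may be checked after restriction to an open covering of $X$, and $\theta _u$ is compatible with such restrictions, so by hypothesis I may assume $\E=f^{+}(\FF)$ with $\FF\in(F\text{-})D^{\mathrm{b}}_{\mathrm{hol}}(S/K)$. Write $g:=f\circ u\colon Y\to S$; it is smooth by assumption, of relative dimension $d'=d-r$ if $d$ is the relative dimension of $f$. Using $u^{+}f^{+}=g^{+}$, $u^{!}f^{!}=g^{!}$ and Poincaré duality \eqref{dual-smooth-morph} for $f$ and $g$, I obtain two canonical isomorphisms
\[
u^{+}f^{+}(\FF)=g^{+}(\FF)\riso g^{!}(\FF)[-2d'](-d'),\qquad u^{!}f^{+}(\FF)[2r](r)\riso g^{!}(\FF)[-2d'](-d'),
\]
the first coming from $\theta _g$, the second from $u^{!}(\theta _f)$ together with $-d+r=-d'$. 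It then suffices to show that, for $\E=f^{+}(\FF)$, the morphism $\theta _u$ is carried to the identity of $g^{!}(\FF)[-2d'](-d')$ by these isomorphisms, for this forces $\theta _u$ to be an isomorphism.

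\textbf{Main obstacle.} This last compatibility is, I expect, the only non-formal point. It is a diagram chase fusing three ingredients: the transitivity of Poincaré duality (\cite[1.5]{Abe-LanglandsIsoc}) along $p _X=p _S\circ f$ and $p _Y=p _S\circ g$, needed to reconcile the purity isomorphism $\mathfrak{p} _u$ built over $\Spec k$ with the $(f,g)$–picture over $S$; the compatibility of $\epsilon _f$ with $\theta _f$ encoded in the commutative square \eqref{II.11firstdiag}; and the transitivity \eqref{trans-epsilon} of the maps $\epsilon$. Once these are assembled into a single commutative diagram linking $\theta _u$, $\theta _f$ and $\theta _g$, the two displayed isomorphisms identify $\theta _u$ with an isomorphism, completing the proof.
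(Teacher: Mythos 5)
Your proposal is correct and follows essentially the same route as the paper: you define $\theta_u$ on $K_X$ via Poincar\'e duality for the structural morphisms, extend to general $\E$ through $\epsilon_u$ and the identification $K_X\otimes\E\riso\E$ (the paper's diagram \ref{def1-theta_u}), and prove isomorphy on constant complexes by comparing with the composite $u^!(\theta_f^{-1})\circ\theta_g$ (the paper's \ref{predef1-theta_u}). The ``main obstacle'' you isolate is exactly the commutative diagram \ref{def1-theta_u-diag1}, which the paper assembles from \ref{trans-epsilon} and \ref{II.11firstdiag} precisely as you describe.
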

 
 \begin{proof}
This can be checked as in \cite[II.11.2]{KW-Weilconjecture}: 
with the notation and hypothesis of the second part, putting $g:= f \circ u$ and $d _g := \dim Y - \dim S$,
for any $\K \in (F \text{-})D ^\mathrm{b} _\mathrm{hol}  (S/K)$, 
by using the isomorphism \ref{dual-smooth-morph}, we get the isomorphism
\begin{equation}
\label{predef1-theta_u}
\theta _u  \colon u ^{+} ( f ^{+}\K) 
\underset{\theta _g}{\riso}
u ^{!} ( f ^{!}\K)  [-2 d _g](d _g)
\underset{u ^{!}(\theta _f ^{-1} )}{\riso}
u ^{!}  ( f ^{+}\K) [2r](r).
\end{equation}
In particular, we get 
$\theta _u \colon u ^{+} (K _X) \riso u ^{!} (K _X) [2r](r)$.
We remark that 
$\theta _u \colon u ^{+} (K _X) \riso u ^{!} (K _X) [2r](r)$
does not depend on the choice of $f$ which can be for instance the structural morphism of 
$X$ (indeed, since $Y$ is smooth, this is up to a shift a morphism of overconvergent isocrystal on $Y$ and 
then we can suppose $S$ smooth ; then this is a consequence
of the transitivity of the isomorphisms of the form
\ref{dual-smooth-morph}). 

For any $\K, \K' \in (F \text{-})D ^\mathrm{b} _\mathrm{hol}  (S/K)$,
using \ref{trans-epsilon} and \ref{II.11firstdiag},
we check the commutativity of the following diagram
\begin{equation}
\label{def1-theta_u-diag1}
\xymatrix{
{u ^{+} f ^{+} ( \K ' \otimes \K)} 
\ar[rr] ^-{\sim}
\ar[d] ^-{\sim} _-{\theta _g}
\ar@/_2cm/[dd] _-{\theta _u}
&& 
{u ^{+} f ^{+} ( \K ')  \otimes u ^{+} f ^{+} (  \K)} 
\ar[d] ^-{\sim} _-{\theta _g \otimes \mathrm{id}}
\ar@/^2.5cm/[dd] ^-{\theta _u \otimes \mathrm{id}}
\\
{u ^{!} f ^{!}  ( \K ' \otimes \K)  [-2 d _g](d _g)} 
&
{u ^{!} ( f ^{!}  ( \K ' )\otimes f ^{+} (  \K))  [-2 d _g](d _g)} 
\ar[l] ^-{\epsilon _f} _-{\sim}
& 
{u ^{!}  f ^{!} ( \K ')  \otimes u ^{+} f ^{+} (  \K)  [-2 d _g](d _g)} 
\ar[l] ^-{\epsilon _u} _-{\sim}
\ar@/_0.5cm/[ll] _-{\epsilon _g} ^-{\sim}
\\ 
{u ^{!} f ^{+}  ( \K ' \otimes \K)  [-2r](r)} 
\ar[u] _-{\sim} ^-{\theta _f}
\ar[r] ^-{\sim}
& 
{u ^{!} (f ^{+}  ( \K ') \otimes f ^{+}  ( \K))  [-2r](r)} 
\ar[u] _-{\sim} ^-{\theta _f \otimes \mathrm{id}}
& 
{u ^{!} f ^{+} ( \K ')  \otimes u ^{+} f ^{+} (  \K)  [-2r](r),} 
\ar[u] _-{\sim} ^-{\theta _f \otimes \mathrm{id}}
\ar[l] ^-{\epsilon _u} _-{\sim}
}
\end{equation}
where $\epsilon _f$, $\epsilon _g$ (and then $\epsilon _u$) are some isomorphisms because of the commutativity of the diagram
\ref{II.11firstdiag}.

More generally (in the context of the first part of the proposition), we define 
$\theta _u \colon u ^{+} (\E) \to u ^{!} (\E) [2r](r)$
so that the diagram 
\begin{equation}
\label{def1-theta_u}
\xymatrix{
{u ^{+} ( \E)} 
\ar[r] ^-{\sim}
\ar@{.>}[d] ^-{\theta _u }
&
{u ^{+} ( K _X \otimes \E)} 
\ar[r] ^-{\sim}
& 
{u ^{+} ( K _X )\otimes u ^{+} (\E)} 
\ar[d] ^-{\sim} _-{\theta _u \otimes id}
\\
{u ^{!} (\E) [2r](r)} 
&  
{u ^{!} ( K _X \otimes \E) [2r](r)} 
\ar[l] ^-{\sim}
& 
{u ^{!} ( K _X )\otimes u ^{+} (\E) [2r](r),} 
\ar[l] ^-{\epsilon _u}
}
\end{equation}
where $\theta _u \colon u ^{+} (K _X) \riso u ^{!} (K _X) [2r](r)$ is defined in \ref{predef1-theta_u} with $f$ equal to the structural 
morphism of $X$, 
is commutative.

We go back to the second part of the proposition, i.e. suppose now that $\E = f ^{+} (\K)$. 
By using the commutativity of the diagram \ref{def1-theta_u-diag1} applied to the case $\K' := K _S$, 
the isomorphism
$ \theta _u \colon u ^{+} (\E) \riso u ^{!} (\E) [2r](r)$
defined in \ref{predef1-theta_u}
is equal to that defined in \ref{def1-theta_u}. Hence, 
$\theta _u $ is indeed an isomorphism in this case. 


 \end{proof}

\begin{empt}
With the notation of \ref{KW-II.11.2-part1}, we get the morphism 
$u _{!}(\theta _u )\colon u _{!} u ^{+} (\E) \to u _{!}  u ^{!} (\E) [2r](r)$.
By adjunction, we have 
$u _{!}  u ^{!} (\E) \to \E$, which gives by composition
$\phi _u := \mathrm{adj} \circ u _{!}(\theta _u ) \colon 
u _{!} u ^{+} (\E) \to \E [2r](r)$.
The goal of this paragraph is to check  that the diagram below
\begin{equation}
\label{phi-phiotimes}
\xymatrix{
{u _{!}  u ^{+} (\E) } 
\ar[r] ^-{\mathrm{proj}} _-{\sim}
\ar[d] ^-{\phi _u }
& 
{u _{!}  u ^{+} (K _X) \otimes \E} 
\ar[d] ^-{\phi _u \otimes \mathrm{id}}
\\ 
{\E [2r](r)} 
\ar[r] ^-{\sim}
& 
{K _X \otimes  \E [2r](r).} 
}
\end{equation}
is commutative. 
It is sufficient to check the commutativity of the diagram
\begin{equation}
\notag
\xymatrix{
{u _{!}  u ^{+} (\E) }
\ar[r] ^-{\sim}
\ar[d] ^-{u _{!} (\theta _u)}
\ar@/_1,2cm/[dd] _-{\phi _u}
&
{u _{!}  u ^{+} (K _X \otimes \E) }
\ar[r] ^-{\sim}
\ar[d] ^-{u _{!} (\theta _u)}
& 
{u _{!}  (u ^{+} (K _X) \otimes  u ^{+} (\E) )}
\ar[r] ^-{\mathrm{proj}} _-{\sim}
\ar[d] ^-{u _{!} (\theta _u \otimes \mathrm{id})}
& 
{u _{!}  u ^{+} (K _X) \otimes \E} 
\ar[d] ^-{u _{!} (\theta _u) \otimes \mathrm{id}}
\ar@/^1,9cm/[dd] ^-{\phi _u \otimes \mathrm{id}}
\\ 
{u _{!}  u ^{!} (\E) [2r](r) }
\ar[r] ^-{\sim}
\ar[d] ^-{\mathrm{adj}}
&
{u _{!}  u ^{!} (K _X \otimes \E) [2r](r) }
\ar[d] ^-{\mathrm{adj}}
& 
{u _{!}  (u ^{!} (K _X) \otimes  u ^{+} (\E) )[2r](r) }
\ar[l] ^-{\epsilon _u}
\ar[r] ^-{\mathrm{proj}} _-{\sim}
& 
{u _{!}  u ^{!} (K _X) \otimes \E [2r](r) } 
\ar[d] ^-{\mathrm{adj}}
\\
{\E [2r](r)}
\ar[r] ^-{\sim}
&
{K _X \otimes \E [2r](r)}
\ar@{=}[rr] 
&& 
{K _X \otimes \E [2r](r).}
}
\end{equation}
From \ref{def1-theta_u}, the middle upper square is commutative.
The commutativity of the other squares is checked by functoriality. 
It remains to check the commutativity of the rectangle, which comes from 
the commutativity of the diagram below:
\begin{equation}
\notag
\xymatrix{
{u _{!}  u ^{!} (K _X \otimes \E) [2r](r) }
\ar@{=}[d] 
&
& 
{u _{!}  (u ^{!} (K _X) \otimes  u ^{+} (\E) )[2r](r) }
\ar[ll] ^-{\epsilon _u}
\ar[d] ^-{\mathrm{adj}}
\ar@{=}@/^2,5cm/[dd] ^-{}
\\ 
{u _{!}  u ^{!} (K _X \otimes \E) [2r](r) }
\ar[d] ^-{\mathrm{adj}}
&
{u _{!}  u ^{!} (u _{!}  u ^{!} (K _X )\otimes \E) [2r](r) }
\ar[l] ^-{\mathrm{adj}}
\ar[d] ^-{\mathrm{adj}}
& 
{u _{!}  u ^{!}  u _{!}  (u ^{!} (K _X) \otimes  u ^{+} (\E) )[2r](r) }
\ar[l] ^-{\sim} _-{\mathrm{proj}}
\ar[d] ^-{\mathrm{adj}}
\\ 
{K _X \otimes \E  [2r](r)} 
& 
{u _{!}  u ^{!} (K _X) \otimes  u ^{+} (\E) )[2r](r)} 
\ar[l] ^-{\mathrm{adj}}
& 
{u _{!}  (u ^{!} (K _X) \otimes  u ^{+} (\E) )[2r](r).} 
\ar[l] ^-{\sim} _-{\mathrm{proj}}
}
\end{equation}
We end this paragraph with a remark:
from the commutativity of the diagram \ref{phi-phiotimes}, 
we can construct the morphism $\phi _u \colon u _! u ^{+} (\E) \to \E [2r](r)$ and then 
 by adjunction 
$\theta _u \colon u ^{+} (\E) \to u ^{!} (\E) [2r](r)$
from 
$\phi _u \colon u _{!} u ^{+} (K _X) \riso K _X [2r](r)$.
%

\end{empt}

\begin{empt}
\label{empt-def-eta}
Let $u \colon Z \hookrightarrow X$ be a closed immersion of pure codimension $r$ in $X$ of smooth realizable $k$-varieties.
Let $\E \in (F \text{-})D ^\mathrm{b} _\mathrm{hol}  (X/K)$. 
From \ref{KW-II.11.2}, we have the morphism
$\theta _u \colon u ^{+} \E \to  u ^{!} \E [2r] (r)$.
The composition of the following three morphisms: 
\begin{equation}
\label{def-eta}
\eta _{u, \E} \colon 
\E
\overset{adj}{\longrightarrow}
u _{+} u ^{+} \E
\riso 
u _{!} u ^{+} \E
\overset{u _! (\theta _u)}{\longrightarrow}
u _{!} u ^{!} \E [2r] (r) 
\overset{adj}{\longrightarrow}
\E [2r] (r)
\end{equation}
is an element of 
$\mathrm{Hom} _{D ^\mathrm{b} _\mathrm{hol}  (X/K)}
\left ( \E , \E [2r]  \right ) $
(resp. $\mathrm{Hom} _{F \text{-}D ^\mathrm{b} _\mathrm{hol}  (X/K)}
\left ( \E , \E [2r] (r) \right ) $).
By using the commutativity of the diagram \ref{phi-phiotimes}, 
we check that
\begin{equation}
\label{empt-def-eta-otimes}
\eta _{u, \E} = \eta _{u, K _{X}} \otimes id _{\E} .
\end{equation}

\end{empt}

\begin{rem}
\label{rem-link3.2.6}
We keep the notation of \ref{empt-def-eta}.
Following the notation of \cite[3.1.1 and 3.1.6]{Abe-LanglandsIsoc}, we put
$H ^{2r} _{Z} (X) (r)
:= 
\mathrm{Hom} _{D ^\mathrm{b} _\mathrm{hol}  (X/K)}
( K _X , u _{+} u ^{!} K _{X} [2r] (r) )$
and 
$H ^{2r} (X) (r)
:= 
\mathrm{Hom} _{D ^\mathrm{b} _\mathrm{hol}  (X/K)}
( K _X , K _{X} [2r] (r) )$. 
From  \cite[3.1.6]{Abe-LanglandsIsoc}, 
the composition 
$u _+ (\theta _u) \circ adj \colon K _X 
\to  
u _{+} u ^{!} K _{X} [2r] (r) $ 
is called the cycle class of $Y$
and is denoted by $cl _{X} (Z) \in H ^{2r} _{Z} (X) (r)$.
Since $u _!$ is a left adjoint functor of $u ^!$ and since $u _+ \riso u _!$, 
we get a canonical homomorphism
$H ^{2r} _{Z} (X) (r) \to H ^{2r} (X) (r)$
which sends 
$cl _{X} (Z)$ to 
$\eta _{u, K _{X}}$ (see \ref{def-eta}).

\end{rem}

In order to check the theorem \ref{IV.1.3} below we will need 
the following lemmas:

\begin{lemm}
\label{inv-eta}
Let 
\begin{equation}
\notag
\xymatrix @ R=0,3cm {
{Z} 
\ar@{^{(}-)}[r] ^-{u}
&
{X} 
\\ 
{Z '} 
\ar@{^{(}-)}[r] ^-{u'}
\ar[u] ^-{g}
& 
{X'} 
\ar[u] ^-{f}
}
\end{equation}
be a cartesian square so that 
$u$ and $u'$ are closed immersions of pure codimension $r$ 
of smooth realizable $k$-varieties. 
Let $\E _X \in (F \text{-})D ^\mathrm{b} _\mathrm{hol}  (X/K)$ and 
$\E _{X'} := f ^{+}(\E _{X})$.
Let 
$\eta _{u, \E _X}\colon  \E _X  \to  \E _X [2r] (r)$
and
$\eta _{u ', \E _{X'}} \colon  \E _{X'}  \to  \E _{X'} [2r] (r)$
be the morphisms as defined in \ref{def-eta}.
Then we get $f ^{+} (\eta _{u , \E _{X}}) = \eta _{u ', \E _{X'}}$. 
\end{lemm}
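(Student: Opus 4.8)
The plan is to reduce the statement to the constant coefficient $K_X$ and then transport the whole composition \ref{def-eta} through the base change isomorphisms attached to the cartesian square. The key reduction is \ref{empt-def-eta-otimes}: since $\eta_{u,\E_X} = \eta_{u,K_X}\otimes \mathrm{id}_{\E_X}$ and $\eta_{u',\E_{X'}} = \eta_{u',K_{X'}}\otimes \mathrm{id}_{\E_{X'}}$, and since $f^{+}$ is monoidal with $f^{+}(K_X)=K_{X'}$ and $f^{+}(\E_X)=\E_{X'}$ by definition, we obtain $f^{+}(\eta_{u,\E_X}) = f^{+}(\eta_{u,K_X})\otimes \mathrm{id}_{\E_{X'}}$. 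Hence it suffices to treat the case $\E_X = K_X$, i.e. to check that $f^{+}(\eta_{u,K_X}) = \eta_{u',K_{X'}}$.

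For this I would write out both compositions following \ref{def-eta} and build a ladder diagram whose vertical arrows are the base change isomorphisms. The first two arrows of \ref{def-eta}, namely the adjunction unit $K_X \to u_{+}u^{+}K_X \riso u_{!}u^{+}K_X$, involve only $u^{+}$: using the proper base change isomorphism $f^{+} u_{!} \riso u'_{!} g^{+}$ (together with $u_{+}\riso u_{!}$ and $u'_{+}\riso u'_{!}$) and the tautological identity $g^{+}u^{+} = (u\circ g)^{+} = (f\circ u')^{+} = u'^{+}f^{+}$ coming from cartesianness, one identifies $f^{+}(u_{!}u^{+}K_X)$ with $u'_{!}u'^{+}K_{X'}$, and the compatibility of the adjunction unit with proper base change shows that $f^{+}$ of this arrow is the analogous arrow for $u'$. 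The last two arrows of \ref{def-eta} compose to the morphism $\phi_u\colon u_{!}u^{+}K_X \to K_X[2r](r)$ of \ref{phi-phiotimes}, so it remains only to check that $f^{+}(\phi_u)$ coincides with $\phi_{u'}$ under the identification of sources just made; this avoids having to produce an independent base change isomorphism for $u^{!}$.

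The heart of the matter, and the step I expect to be the main obstacle, is the compatibility of this middle arrow with $f^{+}$: since $\phi_u = \mathrm{adj}\circ u_{!}(\theta_u)$ and the counit $u_{!}u^{!}(-)\to \mathrm{id}$ is compatible with proper base change, everything comes down to checking that $f^{+}(\theta_u)$ corresponds to $\theta_{u'}$, i.e. that the purity isomorphism of \ref{KW-II.11.2} for a closed immersion of smooth varieties is compatible with the pullback $f^{+}$. The difficulty is that $f$ need not be smooth, so one cannot directly invoke the transitivity of \ref{dual-smooth-morph} to pull $\theta_{p_X}$ and $\theta_{p_Z}$ back along $f^{+}$ and $g^{+}$. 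Instead I would use that, for the constant coefficient, $\theta_u\colon u^{+}(K_X)\riso u^{!}(K_X)[2r](r)$ is intrinsic, independent of the auxiliary smooth morphism used in \ref{predef1-theta_u} as noted there: indeed $u^{+}K_X = K_Z$ canonically and $\theta_u$ realises the purity isomorphism $K_Z \riso u^{!}(K_X)[2r](r)$, which is governed by the trace map. Since the trace map is compatible with base change, which is part of the formalism of \cite[1.5]{Abe-LanglandsIsoc}, its formation commutes with $g^{+}$, which sends $K_Z$ to $K_{Z'}$ and carries $\theta_u$ to $\theta_{u'}$.

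As a conceptual cross-check, the same conclusion can be read off from \ref{rem-link3.2.6}: $\eta_{u,K_X}$ is the image of the cycle class $cl_X(Z)$ under the canonical map $H^{2r}_{Z}(X)(r)\to H^{2r}(X)(r)$, so the assertion amounts to the compatibility of cycle classes with pullback, $f^{+}(cl_X(Z))=cl_{X'}(Z')$, together with the functoriality of that canonical map under $f^{+}$. Once the base change compatibility of $\theta_u$ is secured, the commutativity of the resulting ladder is routine, being checked square by square by functoriality and by the compatibility of the base change isomorphisms with the units and counits of adjunction.
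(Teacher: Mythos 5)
Your reduction to $\E_X = K_X$ via \ref{empt-def-eta-otimes} is exactly the paper's first step. Where you diverge is in what you do next. The paper's entire remaining argument is the observation you relegate to a ``conceptual cross-check'': by Remark \ref{rem-link3.2.6}, $\eta_{u,K_X}$ is the image of the cycle class $cl_X(Z)$ under the canonical map $H^{2r}_Z(X)(r) \to H^{2r}(X)(r)$, and the identity $f^+(\eta_{u,K_X}) = \eta_{u',K_{X'}}$ is then precisely the compatibility of cycle classes with pullback (for closed immersions of the same pure codimension $r$, which is the role of that hypothesis), quoted from \cite[3.2.6]{Abe-LanglandsIsoc}. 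Your declared main route --- a ladder diagram whose crux is that $g^+$ carries $\theta_u$ to $\theta_{u'}$ --- is where I would push back: you justify this step by saying the trace map ``is compatible with base change'', but the trace maps of the structural morphisms $p_X$ and $p_{X'}$ both land in the identity functor over $\Spec k$ and do not sit in any base change square relating them, and since $f$ need not be smooth you cannot use transitivity of \ref{dual-smooth-morph} either; the assertion that $g^+$ takes the purity isomorphism of \ref{predef1-theta_u} to the one for $u'$ is not formal, and is essentially the content of \cite[3.2.6]{Abe-LanglandsIsoc} in disguise. So your main route, as written, assumes the key external input rather than deriving it. The fix is simply to promote your cross-check to the proof: after the reduction to $K_X$, invoke Remark \ref{rem-link3.2.6} and \cite[3.2.6]{Abe-LanglandsIsoc}; the functoriality under $f^+$ of the map $H^{2r}_Z(X)(r)\to H^{2r}(X)(r)$, which you correctly note is also needed, is the formal part.
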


\begin{proof}
Thanks to \ref{empt-def-eta-otimes}, we can suppose $\E _{X} =K _{X}$. This comes from \cite[3.2.6]{Abe-LanglandsIsoc} 
(see also the remark \ref{rem-link3.2.6}).
\end{proof}

\begin{lemm}
\label{pre0-eta-smooth-proj}
Let $\pi \colon \P ^{d} \to \Spec k $ be the canonical projection.
Let $\E \in (F \text{-})D ^\mathrm{b} _\mathrm{hol}  (\P ^{d}/K)$.
Let $H$ be the zero set of a section of the fundamental line bundle $\O _{\P ^{d} } (1)$
and $u \colon H \hookrightarrow \P ^{d}$ be the closed immersion.
The morphism 
$\eta _{u,\E} \colon  \E \to \E  [2] (1)$
as defined in
\ref{def-eta} does not depend on the choice of the hyperplane $H$
and will be denoted by $\eta _{\pi,\E}$. 
\end{lemm}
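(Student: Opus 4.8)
The plan is to reduce the independence statement to the case of the constant coefficient $K _{\P ^d}$ and then to invoke the fact that any two hyperplanes on $\P ^d$ are linearly equivalent, hence related by an automorphism of $\P ^d$ fixing the relevant geometry. First I would use the identity $\eta _{u, \E} = \eta _{u, K _{\P ^d}} \otimes \mathrm{id} _{\E}$ established in \ref{empt-def-eta-otimes}: this shows that it suffices to prove that $\eta _{u, K _{\P ^d}}$ is independent of $H$, since the tensor factor $\E$ plays no role. Thus the whole problem is about a single morphism $\eta _{u, K _{\P ^d}} \colon K _{\P ^d} \to K _{\P ^d} [2](1)$ in the Hom-group $\mathrm{Hom} _{D ^\mathrm{b} _\mathrm{hol} (\P ^d/K)} (K _{\P ^d}, K _{\P ^d} [2](1))$.

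Next I would translate the question into the language of cycle classes via the remark \ref{rem-link3.2.6}: under the canonical homomorphism $H ^{2} _{H} (\P ^d)(1) \to H ^{2} (\P ^d)(1)$, the class $cl _{\P ^d}(H)$ is sent to $\eta _{u, K _{\P ^d}}$. So it is enough to show that the image of $cl _{\P ^d}(H)$ in $H ^2 (\P ^d)(1)$ does not depend on $H$. Two sections of $\O _{\P ^d}(1)$ have zero loci which are linearly equivalent divisors; the expected mechanism is that linearly equivalent divisors have the same cycle class in $H ^2(\P ^d)(1)$, which in turn should follow from the functoriality of cycle classes under pullback (Lemma \ref{inv-eta}) applied to an automorphism of $\P ^d$ carrying one hyperplane to the other, together with the fact that $H ^2(\P ^d)(1)$ is one-dimensional and the cycle class of a hyperplane is its canonical generator.

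Concretely, given two hyperplanes $H _0, H _1$, I would choose a projective linear automorphism $f \colon \P ^d \riso \P ^d$ with $f ^{-1}(H _0) = H _1$ (any two hyperplanes are $\mathrm{PGL} _{d+1}$-equivalent), forming the cartesian square required by Lemma \ref{inv-eta} with $g$ the induced isomorphism $H _1 \riso H _0$. Since $f$ is an isomorphism, $f ^+ (K _{\P ^d}) = K _{\P ^d}$, and Lemma \ref{inv-eta} gives $f ^+ (\eta _{u _0, K _{\P ^d}}) = \eta _{u _1, K _{\P ^d}}$. It then remains to check that $f ^+$ acts as the identity on the relevant Hom-group, i.e.\ that $f ^+ (\eta _{u _0, K _{\P ^d}}) = \eta _{u _0, K _{\P ^d}}$; this is where I expect the main subtlety to lie, since one must know that $f ^+$ fixes the generator of $H ^2(\P ^d)(1)$. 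This follows because $f$ is homotopic to the identity through the connected group $\mathrm{PGL} _{d+1}$ (or directly because $f$ induces the identity on the one-dimensional cohomology $H ^2(\P ^d)(1)$, cycle classes of hyperplanes being preserved by projective automorphisms), giving $\eta _{u _0, K _{\P ^d}} = \eta _{u _1, K _{\P ^d}}$ and hence the desired independence. The main obstacle will be making the last invariance under $\mathrm{PGL} _{d+1}$ precise in the $p$-adic cohomological formalism rather than taking it for granted from the topological intuition.
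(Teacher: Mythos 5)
Your first half matches the paper's argument: reduce to $\E = K_{\P^d}$ via \ref{empt-def-eta-otimes}, choose an automorphism $\sigma$ of $\P^d$ (your $f$) carrying one hyperplane to the other, and apply Lemma \ref{inv-eta} to get $\sigma^+(\eta_{u_0,K}) = \eta_{u_1,K}$. The genuine gap is in your last step. You need $\sigma^+$ to act as the identity on $\mathrm{Hom}(K_{\P^d}, K_{\P^d}[2](1))$, and neither justification you offer works as stated: ``cycle classes of hyperplanes are preserved by projective automorphisms'' is essentially the statement being proved (by Lemma \ref{inv-eta} the class attached to $H_1$ \emph{is} the pullback of the one attached to $H_0$, so asserting preservation is asserting the desired equality); and the homotopy through the connected group $\mathrm{PGL}_{d+1}$ is topological intuition with no counterpart set up in this formalism --- making it precise would require a rigidity or homotopy-invariance statement for these Hom-groups along a family over $\A^1$, which is nowhere established. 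Even the one-dimensionality of $H^2(\P^d)(1)$ is not available here without circularity, since Theorem \ref{IV.1.3} (which would compute $\pi_+ K_{\P^d}$) depends on this lemma; and even granting one-dimensionality you would still need to exclude that $\sigma^+$ acts by a nontrivial scalar.

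The paper sidesteps all of this with an adjunction trick. Instead of comparing $\eta_1$ and $\eta_2$ on $\P^d$, it compares their adjoints $\psi_i := \pi_+(\eta_i)\circ \mathrm{adj} \colon K \to \pi_+ K_{\P^d}[2](1)$ over $\Spec k$; by the adjunction for $(\pi^+,\pi_+)$ one has $\eta_1 = \eta_2$ if and only if $\psi_1 = \psi_2$. From $\sigma^+(\eta_1) = \eta_2$ one gets $\sigma_+(\eta_2) = \eta_1$, and since $\pi\circ\sigma = \pi$, applying $\pi_+$ together with transitivity of the adjunction units gives $\psi_1 = \psi_2$ directly --- no statement about the action of $\sigma^+$ on the cohomology of $\P^d$ is ever needed. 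Replacing your final step by this descent to the base closes the gap.
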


\begin{proof}
We can suppose $\E =K _{X}$.
Let $H _1, H _2$ be respectively the zero set of two sections of $\O _{\P ^{d}} (1)$.
From \ref{def-eta}, for $i =1,2$,
the closed immersions $u _i \colon H _{i} \hookrightarrow  \P ^{d} $
induce the morphisms 
$\eta _i \colon K _{X}\to K _{X}  [2] (1)$.
For $i=1,2$, 
we put
$\psi _i \colon K
\overset{adj}{\longrightarrow}
\pi _{+} \pi ^{+} K
=
\pi _{+} K _X
\overset{\pi _+ (\eta _i )}{\longrightarrow}
\pi _{+} K _X [2] (1)$.
By adjunction, 
$\eta _1= \eta _2 $
if and only if 
$\psi _1= \psi _2 $.
There exists an isomorphism $\sigma \colon \P ^{d}   \riso \P ^{d}  $ so that 
$\sigma ^{-1} (H _{1})= H _{2}$. 
From Lemma \ref{inv-eta}, 
we get 
$\sigma ^{+} (\eta _1) = \eta _2$
and then 
$\sigma _{+} (\eta _2) = \eta _1$.
Since $\pi \circ \sigma = \pi$, this implies that 
$\psi _2 = \psi _1$.
\end{proof}

\begin{empt}
\label{pre-eta-smooth-proj}
Let $S$ be a realizable variety, 
$\pi  \colon \P ^{d} \to \Spec k$ and
$\pi _S \colon \P ^{d} _S \to S$,
$f \colon \P ^{d} _S \to\P ^{d} $ be the canonical projections.
Let $\E \in (F \text{-})D ^\mathrm{b} _\mathrm{hol}  (\P ^{d} _S/K)$.
With the notation \ref{pre0-eta-smooth-proj}, 
we put 
\begin{equation}
\label{def-pre-eta-smooth-proj}
\eta _{\pi _S,\E}:= f ^{+}(\eta _{\pi ,K _{\P ^{d}}}) \otimes Id _{\E}\colon 
\E \to  \E  [2] (1).
\end{equation}

Let $S' \to S $ be a morphism of realizable varieties and
$a \colon \P ^{d} _{S'} \to \P ^{d} _S$ be the induced morphism. 
Then, we remark that 
\begin{equation}
\label{eq-pre-eta-smooth-proj}
a ^{+}(\eta _{\pi _S,\E})
=
\eta _{\pi _{S'}, a ^{+}(\E)}.
\end{equation}

\end{empt}

\begin{lemm}
\label{eta-smooth-proj}
Let $\pi \colon \P ^{d} _S \to S$ be the canonical projection and  
$\iota \colon \P ^{d'} _S \hookrightarrow  \P ^{d} _S$ be a closed $S$-immersion such that $\pi ': = \pi \circ \iota$
is the canonical projection.
Let $\E \in (F \text{-})D ^\mathrm{b} _\mathrm{hol}  (\P ^{d} _S/K)$.
We have the equality 
\begin{equation}
\label{eta-smooth-proj-def}
\iota ^{+} (\eta _{\pi,\E})
=
\eta _{\pi ',\iota ^{+}(\E)}.
\end{equation}
\end{lemm}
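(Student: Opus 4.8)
The plan is to unfold the definition \ref{def-pre-eta-smooth-proj} on both sides and to reduce, via the compatibility of $\iota ^{+}$ with $\otimes$ and with composition of pullbacks, to the analogous statement for the \emph{absolute} projective spaces, where it will follow from Lemma \ref{inv-eta} applied to a suitably chosen hyperplane.

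First I would fix the notation. Write $\pi _0 \colon \P ^{d} \to \Spec k$ and $\pi _0' \colon \P ^{d'} \to \Spec k$ for the structural morphisms, $f \colon \P ^{d} _S \to \P ^{d}$ and $f' \colon \P ^{d'} _S \to \P ^{d'}$ for the canonical projections, and let $j \colon \P ^{d'} \hookrightarrow \P ^{d}$ be the closed immersion deduced from $\iota$ by base change along $S \to \Spec k$, so that $\iota = j \times _k \mathrm{id} _S$ and $f \circ \iota = j \circ f'$. By \ref{def-pre-eta-smooth-proj} we have $\eta _{\pi ,\E} = f ^{+}(\eta _{\pi _0, K _{\P ^{d}}}) \otimes \mathrm{id} _{\E}$ and $\eta _{\pi ',\iota ^{+}(\E)} = (f') ^{+}(\eta _{\pi _0', K _{\P ^{d'}}}) \otimes \mathrm{id} _{\iota ^{+}(\E)}$. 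Since $\iota ^{+}$ is compatible with $\otimes$ and $\iota ^{+} f ^{+} = (f \circ \iota) ^{+} = (f') ^{+} j ^{+}$, I would obtain
\begin{equation}
\notag
\iota ^{+}(\eta _{\pi ,\E}) = (f') ^{+} j ^{+}(\eta _{\pi _0, K _{\P ^{d}}}) \otimes \mathrm{id} _{\iota ^{+}(\E)}.
\end{equation}
Comparing with the formula for $\eta _{\pi ',\iota ^{+}(\E)}$, it then suffices to prove the single equality $j ^{+}(\eta _{\pi _0, K _{\P ^{d}}}) = \eta _{\pi _0', K _{\P ^{d'}}}$ in $\P ^{d'}$.

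To establish this last equality I would choose a section of $\O _{\P ^{d}}(1)$ whose zero set $H$ does not contain $\P ^{d'}$; then $H' := H \cap \P ^{d'}$ is the zero set in $\P ^{d'}$ of the restricted section, hence a hyperplane of $\P ^{d'}$, and the square
\begin{equation}
\notag
\xymatrix{
{H} \ar@{^{(}-)}[r] ^-{u} & {\P ^{d}} \\
{H'} \ar@{^{(}-)}[r] ^-{u'} \ar[u] ^-{g} & {\P ^{d'}} \ar[u] ^-{j}
}
\end{equation}
is cartesian, with $u$ and $u'$ closed immersions of pure codimension $1$ between smooth realizable $k$-varieties. Since $K _{\P ^{d'}} = (\pi _0') ^{+}(K) = j ^{+} (\pi _0) ^{+}(K) = j ^{+}(K _{\P ^{d}})$, Lemma \ref{inv-eta} applied to this square with $\E _X := K _{\P ^{d}}$ gives $j ^{+}(\eta _{u, K _{\P ^{d}}}) = \eta _{u', K _{\P ^{d'}}}$. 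By Lemma \ref{pre0-eta-smooth-proj} the left-hand side equals $j ^{+}(\eta _{\pi _0, K _{\P ^{d}}})$ and the right-hand side equals $\eta _{\pi _0', K _{\P ^{d'}}}$, which is precisely the equality sought.

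The only genuinely geometric input, and the step I expect to require the most care, is the choice of hyperplane: one must ensure that $H$ meets $\P ^{d'}$ in a hyperplane $H'$ (equivalently $\P ^{d'} \not\subset H$), so that the displayed square is cartesian with both horizontal arrows of codimension $1$ and all four varieties smooth, which is exactly the shape needed to invoke Lemma \ref{inv-eta}. Everything else is the formal bookkeeping of unwinding \ref{def-pre-eta-smooth-proj} and using that $\iota ^{+}$ respects tensor products and composition of pullbacks.
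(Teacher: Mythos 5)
Your proposal is correct and follows essentially the same route as the paper: reduce via the definition \ref{def-pre-eta-smooth-proj} to the case $\E = K_{\P^d_S}$ and, via the base-change compatibility \ref{eq-pre-eta-smooth-proj}, to $S = \Spec k$, then conclude from Lemma \ref{inv-eta}. Your write-up merely makes explicit two details the paper leaves implicit, namely the choice of a hyperplane $H$ with $\P^{d'} \not\subset H$ and the appeal to Lemma \ref{pre0-eta-smooth-proj} for independence of that choice.
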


\begin{proof}
By construction (see \ref{def-pre-eta-smooth-proj}) 
we can suppose $\E =K _{\P ^{d}_S}$.
By using the property
\ref{eq-pre-eta-smooth-proj}, we reduce to treat the case $S= \Spec k$.
Then, this comes from \ref{inv-eta}.
\end{proof}

\begin{lemm}
\label{H0q+const}
Let $S$ be a realizable variety, 
$q\colon X=\A ^{d} _S \to S $ be the canonical projection. 
Let $\E \in \mathrm{Hol} (S/K)$.
\begin{enumerate}
\item For any $i \not = 0$, we have 
$\mathcal{H} ^{i} _t q _{+}  q ^{+} (\E)= 0$ and 
$\mathcal{H} ^{2d - i} _t q _{!}  q ^{+} (\E)= 0$. 

\item We have $\mathcal{H} ^{0} _t q _{+} q ^{+} (\E) \riso \E $
and 
$\mathcal{H} ^{2d} _t q _{!}  q ^{+} (\E)\riso \E$ 
in $\mathrm{Hol} (S/K)$.
\end{enumerate}

\end{lemm}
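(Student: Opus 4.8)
The plan is to peel off the coefficient $\E$ with the projection formula, then reduce to the affine line by base change and induction on $d$, and finally deduce the $q_{+}$--statement from the $q_{!}$--statement by duality.

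First I would treat $q_{!}$. Since $K_{\A ^{d} _S} \otimes q ^{+}(\E) \riso q ^{+}(\E)$ and the projection isomorphism of \cite[A.6]{Abe-Caro-weights} gives $q _{!}(K_{\A ^{d} _S} \otimes q ^{+}\E) \riso q _{!}(K_{\A ^{d} _S}) \otimes \E$, we obtain $q _{!} q ^{+}(\E) \riso q _{!}(K_{\A ^{d} _S}) \otimes \E$. As $K _S \otimes \E \riso \E$ (see \ref{def-K_X}) and $\E$ lies in degree $0$, it suffices to prove the isomorphism of constant objects $q _{!}(K_{\A ^{d} _S}) \riso K _S [-2d](-d)$ in $D ^{\mathrm{b}} _{\mathrm{hol}}(S/K)$; tensoring with $\E$ then yields $q _{!}q ^{+}(\E) \riso \E [-2d]$, which gives at once the vanishing $\mathcal{H} ^{2d-i} _t q _{!} q ^{+}(\E) = 0$ for $i \neq 0$ and the isomorphism $\mathcal{H} ^{2d} _t q _{!} q ^{+}(\E) \riso \E$.

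Second, I would prove $q _{!}(K_{\A ^{d} _S}) \riso K _S [-2d](-d)$. The base change isomorphism for $(p _S ^{+}, \pi _{!})$ attached to the cartesian square with vertical maps $\pi \colon \A ^{d} \to \Spec k$ and $q \colon \A ^{d} _S \to S$, and bottom map the structural morphism $p _S \colon S \to \Spec k$, gives $q _{!}(K_{\A ^{d} _S}) \riso p _S ^{+}\,\pi _{!}(K_{\A ^{d}})$, reducing the claim to the absolute statement $\pi _{!}(K_{\A ^{d}}) \riso K [-2d](-d)$ over $\Spec k$. Writing $\A ^{d} = \A ^{1} \times \A ^{d-1}$ and using the transitivity of $q _{!}$ together with the case $d = 1$ over the base $\A ^{d-1}$ (itself obtained from the absolute case $d=1$ by the same base change), I reduce by induction on $d$ to the single computation $\pi _{!}(K_{\A ^{1}}) \riso K [-2](-1)$ for $\pi \colon \A ^{1} \to \Spec k$. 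This last isomorphism is the compactly supported rigid cohomology of the affine line, concentrated in degree $2$; it can be obtained, for instance, from the localization triangle for the open immersion $\A ^{1} \hookrightarrow \P ^{1}$ with closed complement $\{\infty\}$ together with the cohomology of $\P ^{1}$.

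Finally, the $q _{+}$--statement follows from the $q _{!}$--statement by duality, which avoids the star--pushforward base change (not available over a general base). Writing $\DD$ for the dual functor and using $\DD q ^{+} = q ^{!} \DD$, $\DD q _{!} = q _{+} \DD$, biduality $\DD\DD \riso \mathrm{id}$, and the Poincar\'e duality isomorphism $q ^{!} \riso q ^{+}[2d](d)$ coming from \ref{dual-smooth-morph}, I compute $q _{+} q ^{+}(\E) \riso \DD\, q _{!} q ^{!} \DD(\E) \riso \DD\big((q _{!} q ^{+}(\DD \E))[2d](d)\big)$. Applying the already proven $q _{!}$--isomorphism to $\DD\E$ gives $(q _{!} q ^{+}(\DD\E))[2d](d) \riso \DD\E$, whence $q _{+}q ^{+}(\E) \riso \DD\DD(\E) \riso \E$, concentrated in degree $0$; this is exactly $\mathcal{H} ^{0} _t q _{+} q ^{+}(\E) \riso \E$ together with $\mathcal{H} ^{i} _t q _{+} q ^{+}(\E) = 0$ for $i \neq 0$. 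The only substantial input is the concentration of $\pi _{!}(K_{\A ^{1}})$ in a single degree; the rest is the bookkeeping ensuring that this concentration is propagated through the induction, the base change, the tensor product with $\E$, and the duality, and that the Tate twists are harmless in $D ^{\mathrm{b}} _{\mathrm{hol}}(S/K)$.
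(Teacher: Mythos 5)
Your proof is correct, and it lands on the same essential input as the paper --- the one\--dimensionality of the cohomology of the affine line --- but by a reversed route. The paper proves the $q_{+}$ statements directly: it uses \ref{dual-smooth-morph} to reduce the $q_{!}$ statements to the $q_{+}$ ones, reduces to $d=1$ by transitivity of the pushforward, and then identifies $q_{+}q^{+}(\E)$ with the relative de Rham cohomology of $\A^{1}_{S}/S$ with coefficients in $q^{+}[1](\E)$, computed directly over the base $S$ and with general coefficients. You instead do the $q_{!}$ side first, peeling off $\E$ by the projection formula and passing to $S=\Spec k$ by base change, so that the only computation left is the absolute $\pi_{!}(K_{\A^{1}})$, and then you recover $q_{+}$ by biduality. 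Your version buys a clean separation of the formal bookkeeping from the single cohomological input, and it is careful to use the projection formula and base change only in the $!$\--direction, where they are available for the non-proper $q$ (this is exactly how the paper itself argues in the proof of \ref{IV.2.6}); the paper's version avoids the projection formula and base change entirely at the cost of doing the de Rham computation with coefficients over a general base. One presentational caveat: to respect the paper's logical order you should compute $\pi_{!}(K_{\A^{1}})$ directly as the compactly supported rigid cohomology of the line rather than via the localization triangle and the cohomology of $\P^{1}$, since the decomposition of $R\Gamma(\P^{1})$ is formally an instance of Theorem \ref{IV.1.3}, which is proved after this lemma; of course the cohomology of $\P^{1}$ is classical and independent of that theorem, so this is not a genuine gap.
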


\begin{proof}
From \ref{dual-smooth-morph}, we can only consider the pushforward case. 
By transitivity of the pushforward, 
we reduce to the case where $d=1$. 
The complex $q _{+}  q ^{+} (\E)$ is isomorphic to the relative de Rham cohomology of $\A ^{1} _S /S$ of 
$q ^{+} [d] (\E)\in \mathrm{Hol} (\A ^{1} _S/K)$.
Then, this is an easy computation.
\end{proof}

\begin{thm}
\label{IV.1.3}
Let $\pi \colon \P ^{n} _S \to S$ be the canonical projection,  
$\iota \colon X \hookrightarrow  \P ^{n} _S$ be a closed immersion 
such that, for any closed point $s$ of $S$, 
$f ^{-1} (s) \riso \P ^{d} _{k(s)}$ where $k(s)$ is the residue field of $s$ and $f: = \pi \circ \iota$ (we might call such a morphism 
$f$ a $\P ^{d}$-fibration morphism). 
Let $\E \in (F \text{-})D ^\mathrm{b} _\mathrm{hol}  (S/K)$.
With the notation of \ref{def-pre-eta-smooth-proj}, we put
$$\eta = \iota ^{+}\eta _{\pi, \pi ^{+}(\E) [-2] (-1)}
\colon 
f ^{+}(\E) [-2] (-1) \to f ^{+}(\E).$$
By composition, for any integer $i \geq 0$, we get 
$\eta ^{i} \colon f ^{+}(\E) [-2i] (-i) \to f ^{+}(\E) $.
By adjunction, this is equivalent to have a morphism of the form
$\E [-2i] (-i) \to f _{+} \circ f ^{+}(\E) $, which by abuse of notation will still be denoted by 
$\eta ^{i}$.
The following map
\begin{equation}
\label{KW-IV.1.3}
\oplus _{i=0} ^{d} \eta ^{i}
\colon 
\bigoplus _{i=0} ^{d}
 \E [-2i] (-i) \to f _{+} \circ f ^{+}(\E) 
\end{equation}
is an isomorphism.
\end{thm}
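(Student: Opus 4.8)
The plan is to prove that the map $\bigoplus_{i=0}^{d}\eta^{i}$ is an isomorphism by reducing to a purely local, fiberwise computation and then invoking the projective-space case. First I would observe that the statement is local on $S$, so by the hypothesis that $f$ is a $\P^d$-fibration I may assume, after passing to a suitable open cover, that $X = \P^d_S$ and that $f = \pi$ is the canonical projection; the compatibility \ref{eta-smooth-proj-def} from Lemma \ref{eta-smooth-proj} together with \ref{eq-pre-eta-smooth-proj} guarantees that the class $\eta$ is compatible with this localization and with base change, so the reduction is harmless. By the defining formula \ref{def-pre-eta-smooth-proj}, we have $\eta_{\pi,\pi^+(\E)} = f_0^+(\eta_{\pi_0,K_{\P^d}})\otimes \mathrm{id}$ where $f_0\colon \P^d_S\to\P^d$ is the projection, so the whole morphism $\bigoplus_i\eta^i$ is obtained from the absolute cup-power map over $\P^d$ by the operation $(-)\otimes\mathrm{id}_{\E}$ and pullback. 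Since $\otimes$ and $f^+$ are exact and commute with finite direct sums and with the cohomological formalism, it suffices to prove the statement in the universal case $S=\Spec k$, $\E = K$.

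Next I would reduce to computing the source and target cohomologies of $f_+f^+(\E)$ and checking that $\eta^i$ induces the expected comparison. In the reduced case, $f_+f^+(K) = \pi_+K_{\P^d}$ is the total $p$-adic cohomology of projective space, and the key input is that $H^{2j}(\P^d)(j)\cong K$ for $0\le j\le d$ and vanishes in odd degrees, while the cup product by the hyperplane class $\eta_{\pi,K}$ realizes the isomorphism $H^0 \riso H^2(1) \riso \cdots \riso H^{2d}(d)$. The morphism \ref{KW-IV.1.3} is, degree by degree, exactly the map sending the $i$-th summand $K[-2i](-i)$ to the degree-$2i$ part of $\pi_+K_{\P^d}$ via $\eta^i$. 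I would therefore prove that $\eta_{\pi,K}$ (the cycle-class/cup construction of Lemma \ref{pre0-eta-smooth-proj}, which is independent of the hyperplane $H$) induces an isomorphism on each cohomology step. Concretely, using the excision/Gysin triangle attached to the closed immersion $u\colon H\hookrightarrow\P^d$ and the definition \ref{def-eta} of $\eta_{u,K}$ as the composite $K_X\to u_!u^+K_X\xrightarrow{u_!\theta_u}u_!u^!K_X[2](1)\to K_X[2](1)$, one recognizes $\eta$ as multiplication by the hyperplane class, and an induction on $d$ (peeling off a hyperplane $\P^{d-1}\hookrightarrow\P^d$ via Lemma \ref{eta-smooth-proj} and the affine-space cohomology vanishing of Lemma \ref{H0q+const}) identifies the powers $\eta^i$ with the successive generators of $H^{2i}(\P^d)(i)$.

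The inductive step is where I would spend the most care, and it is the main obstacle. The plan is: stratify $\P^d = \A^d \sqcup \P^{d-1}$ and use the localization triangle relating $\pi_+K_{\P^d}$, the cohomology of the affine cell $\A^d$ (which by Lemma \ref{H0q+const} contributes only in the top degree after the appropriate shift), and the cohomology of the hyperplane $\P^{d-1}$. Lemma \ref{eta-smooth-proj} (equation \ref{eta-smooth-proj-def}) ensures that restricting $\eta_{\pi,K}$ along $\iota\colon\P^{d-1}\hookrightarrow\P^d$ gives the hyperplane class $\eta_{\pi',K}$ on $\P^{d-1}$, so the cup-power maps are compatible with the closed stratum; this compatibility is exactly what lets the induction close. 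I would then argue that the long exact cohomology sequence, combined with the computed ranks and the compatibility of $\eta$ with restriction, forces $\eta^i$ to be an isomorphism onto the $2i$-th cohomology for each $i$, whence $\bigoplus_{i=0}^d\eta^i$ is an isomorphism. The delicate point is bookkeeping the Tate twists and shifts so that each $\eta^i$ lands in the correct graded piece and that the boundary maps in the localization triangle vanish for degree reasons; this is the $p$-adic analogue of the classical computation of the cohomology ring of projective space, and once the degeneration of the relevant triangle is established, the semisimplicity built into the category $\mathrm{Hol}(S/K)$ guarantees that the fiberwise isomorphism spreads out over all of $S$.
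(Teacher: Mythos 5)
Your overall strategy coincides with the paper's: reduce to $\E = K_S$ via the projection formula, reduce to a single fiber $S=\Spec k$, $X=\P^d$ (using \ref{eq-pre-eta-smooth-proj} and Lemma \ref{eta-smooth-proj}), and then induct on $d$ by means of the decomposition $\P^d = \A^d \sqcup \P^{d-1}$, the localization triangle, and the vanishing of Lemma \ref{H0q+const}. Two points need repair, however. First, your opening reduction assumes that a $\P^d$-fibration in the sense of the statement is Zariski-locally trivial over $S$; that is not part of the hypothesis, which only constrains the fibers over closed points. The correct move, and the one the paper makes, is to observe that a morphism of $D^{\mathrm{b}}_{\mathrm{hol}}$ is an isomorphism if and only if its pullback to every closed point of $S$ is one, and then to invoke \ref{eq-pre-eta-smooth-proj}; since your subsequent argument only uses the case $S=\Spec k$, this is fixable, but as written the reduction is unjustified.

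Second, and more substantively, your inductive step is under-specified exactly where it is delicate: in top degree. Compatibility of $\eta$ with restriction to the hyperplane $u\colon \P^{d-1}\hookrightarrow \P^{d}$, together with the induction hypothesis, controls $\tau_{\leq 2d-1}$ of the map (this is the paper's step a)), but it says nothing about degree $2d$, because $g_{+}g^{+}(K)$ for $g\colon \P^{d-1}\to\Spec k$ is concentrated in degrees $\leq 2d-2$. Matching ranks does not force a \emph{given} map to be an isomorphism; you must show that $\eta^{d}$ is nonzero on $\mathcal{H}^{2d}_{t}$. The paper does this by factoring $\eta^{d}=\eta\circ\eta^{d-1}$ and rewriting the last factor through $f_{+}u_{+}u^{+}f^{+}(K)\to f_{+}u_{+}u^{!}f^{+}(K)[2](1)\to f_{+}f^{+}(K)[2](1)$, using Proposition \ref{KW-II.11.2}.\ref{KW-II.11.2.2} (the Gysin map $\theta_{u}$ is an isomorphism on constant objects) together with the fact that the cone of the adjunction $u_{+}u^{!}\to \mathrm{id}$ applied to $f^{+}(K)$ is $q_{+}q^{+}(K)$, which has no cohomology in degrees $\geq 2d-1$ by Lemma \ref{H0q+const}. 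You name all of these ingredients but never assemble them into the top-degree assertion, and "the boundary maps vanish for degree reasons" does not substitute for this. Finally, the closing appeal to semisimplicity of $\mathrm{Hol}(S/K)$ is a red herring: nothing of the sort is needed to pass from fibers to $S$; detection of isomorphisms on closed fibers suffices.
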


\begin{proof}
The diagram
\begin{equation}
\label{1.15.2-diag}
\xymatrix{
{K _S \otimes \E} 
\ar[dd] ^-{\sim}
\ar[r] ^-{\mathrm{adj}\otimes Id _\E}
& 
{f _{+} f ^{+} (K _S)\otimes \E} 
\ar@{=}[r]
& 
{f _{+} (K _X)\otimes \E} 
\ar[rr] ^-{f _{+}(\eta)\otimes Id _\E}
\ar[d] ^-{\sim} _-{\mathrm{proj}}
&&
{f _{+} (K _X [2](1)) \otimes \E} 
\ar[d] ^-{\sim} _-{\mathrm{proj}}
\\
{} 
& 
{} 
& 
{f _{+} (K _X\otimes f ^{+}\E)} 
\ar[rr] ^-{f _{+}( \eta \otimes Id _{f ^{+}(\E)})}
\ar[d] ^-{\sim}
&&
{f _{+} (K _X [2](1) \otimes f ^{+}(\E))} 
\ar[d] ^-{\sim}
\\
{\E} 
\ar[rr] ^-{\mathrm{adj}}
& 
&
{f _{+} f ^{+} (\E)} 
\ar[rr] ^-{f _{+}(\eta _{f ^{+}(\E)})}
& &
{f _{+} f ^{+} (\E[2](1)),} 
}
\end{equation}
where the vertical arrows of the top are the projection isomorphisms (recall that 
since $f$ is proper, we have $f _+\riso f _!$), 
is commutative (indeed, the commutativity of the square below comes from the definition
\ref{def-pre-eta-smooth-proj}, that of the other square is functorial and that of the rectangle is left to the reader). 
By using the commutativity of the diagram \ref{1.15.2-diag}, we can suppose $\E= K _S$.

The fact that the morphism \ref{KW-IV.1.3} is actually an isomorphism can be checked after
pulling back by the closed immersions induced by the closed points of $S$. 
Hence, by using \ref{eq-pre-eta-smooth-proj}, 
we can suppose that $S= \Spec k$ and $X = \P ^{d}$.
From \ref{eta-smooth-proj}, 
we can suppose that $d =n$, i.e. $\iota $ is the identity and $f$ is the canonical projection
$\P ^{d} \to \Spec k $. 

We proceed by induction on $d \geq 0$.
The case $d= 0$ is obvious. 
So, we can suppose $d \geq 1$.
Let $q \colon \A ^{d}\to \Spec k$ the projection, 
$H:= X \setminus \A ^{d}$ be the hyperplane at the infinity, 
$u \colon H \hookrightarrow X$ the induced closed immersion,
$g := f \circ u $.
We put 
$ \widetilde{\eta} ^{i}:= u ^{+} (\eta ^{i}) 
\colon g ^{+}(K) [-2i] (-i) \to g ^{+}(K) $. Again by abuse of notation,
let $ \widetilde{\eta} ^{i}\colon K [-2i] (-i) \to g _{+}g ^{+}(K) $ be the morphism induced by adjunction.
From the transitivity of the adjunction morphism, we get the commutativity of the left square: 
\begin{equation}
\notag
\xymatrix @R=0,3cm{
{\eta ^{i}\colon}
&
{K [-2i] (-i) } 
\ar[r] ^-{\mathrm{adj}}
\ar@{=}[d] ^-{}
& 
{f _+ f ^{+} (K)[-2i] (-i) } 
\ar[r] ^-{f _{+}(\eta ^i)}
\ar[d] ^-{\mathrm{adj}}
& 
{f _+ f ^{+} (K)} 
\ar[d] ^-{\mathrm{adj}}
\\
{\widetilde{\eta} ^{i}   \colon}
&
{K[-2i] (-i) } 
\ar[r] ^-{\mathrm{adj}}
& 
{f _+ u _{+} u ^{+}  f ^{+} (K)[-2i] (-i) } 
\ar[r] ^-{g _{+}(\widetilde{\eta} ^{i}) }
& 
{f _+ u _{+} u ^{+} f ^{+} (K).} 
}
\end{equation}
This induces the following commutative square
\begin{equation}
\label{KW-IV.1.3-diag2}
\xymatrix @R=0,3cm{
{\oplus _{i=0} ^{d-1} \eta ^{i} \colon}
&
{\bigoplus _{i=0} ^{d-1} K [-2i] (-i) } 
\ar[r] ^-{}
\ar@{=}[d] ^-{}
& 
{f _+ f ^{+} (K)} 
\ar[d] ^-{\mathrm{adj}}
\\
{\oplus _{i=0} ^{d-1}\widetilde{\eta} ^{i}   \colon}
&
{\bigoplus _{i=0} ^{d-1} K[-2i] (-i) } 
\ar[r] ^-{\sim}
& 
{g _{+} g ^{+} (K).} 
}
\end{equation}

 a) From \ref{eta-smooth-proj-def}, we get the equality
$\widetilde{\eta}  = \eta _{g, g ^{+} (K)}$. 
By using the induction hypothesis applied to $g$, the arrow of the bottom of the diagram \ref{KW-IV.1.3-diag2} is an isomorphism.
We denote by $\tau _{\leq 2 d -1}$
the truncation functor of the canonical t-structure of \cite{Abe-Caro-weights}.
Since we have the exact triangle of localization
$q _{!} q ^{+} (K ) \to f _+ f ^{+} (K) \to g _{+} g ^{+} (K) \to +1$ and the Lemma \ref{H0q+const},
then after having applied the functor
$\tau _{\leq 2 d -1}$ to the right
morphism of \ref{KW-IV.1.3-diag2} 
we get an isomorphism (for the degree $2d -1$, we use that
$\mathcal{H} _t ^{2d-1}g _+ g ^+ (K)= 0$).
By considering \ref{KW-IV.1.3-diag2}, 
this implies that the truncation 
$\tau _{\leq 2 d -1} (\oplus _{i=0} ^{d} \eta ^{i})$
of \ref{KW-IV.1.3} is an isomorphism.

b) Now, consider the following commutative diagram:
\begin{equation}
\label{KW-IV.1.3-diag3}
\xymatrix @R=0,3cm{
{K [-2d] (-d) } 
\ar[r] ^-{\mathrm{adj}}
\ar@{=}[d] ^-{}
& 
{f _+ f ^{+} (K)[-2d] (-d) } 
\ar[r] ^-{f _+(\eta ^{d-1})}
\ar[d] ^-{\mathrm{adj}}
& 
{f _+ f ^{+} (K)  [-2](-1)} 
\ar[d] ^-{\mathrm{adj}}
\ar[r] ^-{f _{+}(\eta)}
&
{f _+ f ^{+} (K)} 
\\
{K[-2d] (-d) }
\ar[r] ^-{\mathrm{adj}}
\ar@/_0.8cm/[rr] ^-{\widetilde{\eta} ^{d-1}}
& 
{f _+ u _{+} u ^{+}  f ^{+} (K)[-2d] (-d) } 
\ar[r] ^-{g _{+}(\widetilde{\eta} ^{d-1})}
& 
{f _+ u _{+} u ^{+} f ^{+} (K) [-2](-1)} 
\ar[r] ^-{\sim} _-{g _{+}(\theta _u)}
& 
{f _+ u _{+} u ^{!} f ^{+} (K),} 
\ar[u] ^-{\mathrm{adj}}
}
\end{equation}
where the right arrow of the bottom is an isomorphism because of \ref{KW-II.11.2}.\ref{KW-II.11.2.2}
(the commutativity of the right square follows from the definition \ref{def-eta} and the notation \ref{def-pre-eta-smooth-proj}).
Hence, by using the induction hypothesis, we check that after having applied  the functor 
$\tau _{\geq 2 d}$ 
to the diagram
\ref{KW-IV.1.3-diag3},
the composition of the arrows of the bottom becomes an isomorphism.
A cone of the right morphism of \ref{KW-IV.1.3-diag3}
is isomorphic to $q _{+} q ^{+} (K)$.
From \ref{H0q+const}, we get
$ \tau _{\geq 2 d-1} q _{+} q ^{+} (K)=0$.
Hence, by applying
$\tau _{\geq 2 d}$ 
to the right morphism of \ref{KW-IV.1.3-diag3}, we get 
an isomorphism. 
This implies that $\tau _{\geq 2 d} (\eta ^{d})$
is an isomorphism.
Hence, so is 
$\tau _{\geq 2 d } (\oplus _{i=0} ^{d} \eta ^{i})$.
Using the step a) of the proof, we can conclude. 
\end{proof}

\begin{coro}
\label{III.7.7}
We keep the geometrical notation of \ref{IV.1.3} and we suppose $f$ smooth.
Let 
$\E 
\in 
F \text{-}D ^\mathrm{b} _\mathrm{hol}  (S/K) ^{\leq 0}$
and
$\FF \in
F \text{-}D ^\mathrm{b} _\mathrm{hol}  (S/K) ^{\geq 0}$.
Let $\G 
\in 
D ^\mathrm{b} _\mathrm{hol}  (S/K) ^{\leq 0}$
and
$\H \in
D ^\mathrm{b} _\mathrm{hol}  (S/K) ^{\geq 0}$.
Then 
\begin{gather}
\notag
Hom _{D ^\mathrm{b} _\mathrm{hol}  (X/K)}
(f ^{+} (\G), f ^{+}(\H))
=
Hom _{D ^\mathrm{b} _\mathrm{hol}  (S/K)}
(\G, \H);
\\
\label{III.7.7-7.8,11.2-eq1}
Hom _{F \text{-}D ^\mathrm{b} _\mathrm{hol}  (X/K)}
(f ^{+} (\E), f ^{+}(\FF))
=
Hom _{F \text{-}D ^\mathrm{b} _\mathrm{hol}  (S/K)}
(\E, \FF).
\end{gather}
\end{coro}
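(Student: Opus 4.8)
The plan is to deduce this from Theorem \ref{IV.1.3} by combining the adjunction $(f^{+}, f_{+})$ with the orthogonality of the t-structure. The two asserted equalities (with and without Frobenius structure) are formally identical, so I would run the argument once for the pair $(\G, \H)$ and note at the end that replacing $(\G,\H)$ by $(\E,\FF)$ in the category $F\text{-}D^\mathrm{b}_\mathrm{hol}(S/K)$ gives the second line verbatim.

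First I would use that $f^{+}$ is left adjoint to $f_{+}$ to rewrite
\begin{equation}
\notag
\mathrm{Hom}_{D^\mathrm{b}_\mathrm{hol}(X/K)}(f^{+}(\G), f^{+}(\H)) \riso \mathrm{Hom}_{D^\mathrm{b}_\mathrm{hol}(S/K)}(\G, f_{+} f^{+}(\H)).
\end{equation}
By naturality of the unit, the natural map $\alpha \mapsto f^{+}(\alpha)$ from $\mathrm{Hom}_{D^\mathrm{b}_\mathrm{hol}(S/K)}(\G, \H)$ corresponds under this isomorphism to post-composition with the adjunction unit $\mathrm{adj}\colon \H \to f_{+} f^{+}(\H)$, namely $\alpha \mapsto \mathrm{adj}\circ \alpha$. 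Hence it suffices to show that $\mathrm{adj}$ induces an isomorphism after applying $\mathrm{Hom}_{D^\mathrm{b}_\mathrm{hol}(S/K)}(\G, -)$.

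Next I would invoke Theorem \ref{IV.1.3} applied to $\H$, which yields the decomposition
\begin{equation}
\notag
\bigoplus_{i=0}^{d} \eta^{i} \colon \bigoplus_{i=0}^{d} \H[-2i](-i) \riso f_{+} f^{+}(\H).
\end{equation}
The key compatibility is that the $i=0$ component $\eta^{0}$ is, by construction, the adjunction morphism itself: since $\eta^{0}$ is the identity of $f^{+}(\H)$, its image under the adjunction $(f^{+},f_{+})$ is precisely $\mathrm{adj}$. Thus under the above decomposition the unit $\mathrm{adj}$ is identified with the inclusion of the summand $i=0$, and applying $\mathrm{Hom}_{D^\mathrm{b}_\mathrm{hol}(S/K)}(\G, -)$ gives
\begin{equation}
\notag
\mathrm{Hom}_{D^\mathrm{b}_\mathrm{hol}(S/K)}(\G, f_{+} f^{+}(\H)) \riso \bigoplus_{i=0}^{d} \mathrm{Hom}_{D^\mathrm{b}_\mathrm{hol}(S/K)}(\G, \H[-2i](-i)),
\end{equation}
with the image of the natural map being exactly the summand $i=0$.

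Finally I would show that the summands with $i \geq 1$ vanish. Since the Tate twist is t-exact, $\H[-2i](-i)$ lies in $D^\mathrm{b}_\mathrm{hol}(S/K)^{\geq 2i} \subseteq D^\mathrm{b}_\mathrm{hol}(S/K)^{\geq 1}$ for every $i \geq 1$, whereas $\G \in D^\mathrm{b}_\mathrm{hol}(S/K)^{\leq 0}$; the orthogonality axiom of the t-structure then forces $\mathrm{Hom}_{D^\mathrm{b}_\mathrm{hol}(S/K)}(\G, \H[-2i](-i)) = 0$. Only the $i=0$ summand survives, so the natural map $f^{+}$ is a bijection, which is the first equality; the identical chain of isomorphisms for $(\E,\FF)$ gives the Frobenius version. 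I expect the only delicate point to be the identification of the natural map $f^{+}$ with post-composition by the unit together with the fact that $\eta^{0}$ is precisely the adjunction morphism; once this compatibility is in place, the remainder is formal t-structure bookkeeping built directly on Theorem \ref{IV.1.3}.
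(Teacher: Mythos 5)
Your proposal is correct and follows essentially the same route as the paper: both reduce via the adjunction $(f^{+},f_{+})$ to studying the unit $\H\to f_{+}f^{+}(\H)$, identify it with the $i=0$ component of the decomposition of Theorem \ref{IV.1.3}, and kill the remaining contributions using that $\H[-2i](-i)$ lies in degrees $\geq 2i$ while $\G$ lies in degrees $\leq 0$. The only cosmetic difference is that the paper phrases this by truncating both sides to $\mathcal{H}^{0}_{t}$ rather than by invoking orthogonality summand by summand; your explicit check that the unit is the inclusion of the $i=0$ summand is a point the paper leaves implicit.
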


\begin{proof}
Since 
$\E \in 
F \text{-}D ^\mathrm{b} _\mathrm{hol}  (S/K) ^{\leq 0}$
and $\FF \in
F \text{-}D ^\mathrm{b} _\mathrm{hol}  (S/K) ^{\geq 0}$,
then 
\begin{equation}
Hom _{F \text{-}D ^\mathrm{b} _\mathrm{hol}  (S/K)}
(\E, \FF)
=
Hom _{F \text{-}D ^\mathrm{b} _\mathrm{hol}  (S/K)}
(\mathcal{H} ^{0} _t \E, \mathcal{H} ^{0} _t \FF).
\end{equation}
Since $f$ is smooth, 
then the functor 
$f _{+}f ^{+} $
preserves 
$F \text{-}D ^\mathrm{b} _\mathrm{hol}  (S/K) ^{\geq 0}$.
Hence, by adjunction, we get
\begin{equation}
\notag
Hom _{F \text{-}D ^\mathrm{b} _\mathrm{hol}  (X/K)}
(f ^{+} (\E), f ^{+}(\FF))
=
Hom _{F \text{-}D ^\mathrm{b} _\mathrm{hol}  (S/K)}
(\E, f _{+}f ^{+} (\FF))
=
Hom _{F \text{-}D ^\mathrm{b} _\mathrm{hol}  (S/K)}
(\mathcal{H} ^{0} _t  \E, \mathcal{H} ^{0} _t  f _{+}f ^{+} (\FF)).
\end{equation}
With $\mathcal{H} ^{0} _t \FF
\underset{\ref{IV.1.3}}{\riso}
\mathcal{H} ^{0} _t f _{+} \circ f ^{+} (\mathcal{H} ^{0} _t \FF)
\riso 
\mathcal{H} ^{0} _t f _{+} \circ f ^{+} (\FF)$,
then we obtain the last equality of \ref{III.7.7-7.8,11.2-eq1}. 
The proof without Frobenius is identical.
\end{proof}

\begin{prop}
\label{III.7.8}
We keep the notation and hypotheses of 
\ref{III.7.7}.
\begin{enumerate}
\item The functor 
$f ^{+} [d]\colon (F \text{-})\mathrm{Hol} (S/K) \to 
 (F \text{-})\mathrm{Hol} (X/K)$
is t-exact and fully faithful.

\item For any 
$\E, \FF \in \mathrm{Hol} (S/K)$, the functor 
$f ^{+}[d]$ induces  the equality
\begin{equation}
\label{Ext1-const}
\mathrm{Ext} ^{1}   _{ \mathrm{Hol} (S/K)}
(\E,  \FF)
=
\mathrm{Ext}  ^{1} _{ \mathrm{Hol} (X/K)}
(f ^{+}[d](\E), f ^{+}[d](\FF)).
\end{equation}
\end{enumerate}
\end{prop}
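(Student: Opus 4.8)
For the first part, the t-exactness of $f ^{+}[d]$ is immediate from Poincaré duality: since $f$ is smooth equidimensional of relative dimension $d$, the morphism $\theta _f \colon f ^{+}[d] \riso f ^{!}[-d](-d)$ of \ref{dual-smooth-morph} is an isomorphism of t-exact functors, so in particular $f ^{+}[d]$ is t-exact and carries $(F\text{-})\mathrm{Hol}(S/K)$ into $(F\text{-})\mathrm{Hol}(X/K)$. For full faithfulness, let $\E, \FF \in (F\text{-})\mathrm{Hol}(S/K)$. As they lie in the heart, the shift $[d]$ is harmless and Corollary \ref{III.7.7} applies (with $\E \in D^{\leq 0}$ and $\FF \in D^{\geq 0}$), giving
\[
\mathrm{Hom}_{(F\text{-})\mathrm{Hol}(X/K)}(f^+[d]\E, f^+[d]\FF)
=\mathrm{Hom}_{(F\text{-})D^\mathrm{b}_\mathrm{hol}(X/K)}(f^+\E, f^+\FF)
=\mathrm{Hom}_{(F\text{-})D^\mathrm{b}_\mathrm{hol}(S/K)}(\E, \FF),
\]
and the right-hand side equals $\mathrm{Hom}_{(F\text{-})\mathrm{Hol}(S/K)}(\E, \FF)$ because $\E, \FF$ are objects of the heart and the heart is a full subcategory of the derived category.

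For the second part, the plan is to reduce the $\mathrm{Ext}^1$ in the heart to a $\mathrm{Hom}$ in the derived category and then to transport it to the base $S$ exactly as in the proof of \ref{III.7.7}, but with an extra shift $[1]$. Using the standard identification $\mathrm{Ext}^1_{\mathrm{Hol}}(\E,\FF)=\mathrm{Hom}_{D^\mathrm{b}_\mathrm{hol}}(\E,\FF[1])$ valid for the heart of a t-structure, unwinding the shift $[d]$, and applying the adjunction $(f^+, f_+)$ (recall $f$ is proper, so $f_+\riso f_!$), one obtains
\[
\mathrm{Ext}^1_{\mathrm{Hol}(X/K)}(f^+[d]\E, f^+[d]\FF)
=\mathrm{Hom}_{D^\mathrm{b}_\mathrm{hol}(X/K)}(f^+\E, f^+\FF[1])
=\mathrm{Hom}_{D^\mathrm{b}_\mathrm{hol}(S/K)}(\E, f_+f^+\FF[1]).
\]
By Theorem \ref{IV.1.3} we have $f_+f^+\FF \riso \bigoplus_{i=0}^{d} \FF[-2i](-i)$, and hence $f_+f^+\FF[1] \riso \bigoplus_{i=0}^{d} \FF[1-2i](-i)$.

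The decisive step is then a degree count on the resulting sum $\bigoplus_{i=0}^{d}\mathrm{Hom}_{D^\mathrm{b}_\mathrm{hol}(S/K)}(\E, \FF(-i)[1-2i])$. For $i=0$ one recovers exactly $\mathrm{Hom}_{D^\mathrm{b}_\mathrm{hol}(S/K)}(\E, \FF[1])=\mathrm{Ext}^1_{\mathrm{Hol}(S/K)}(\E, \FF)$, which is the source of \ref{Ext1-const}. For each $i\geq 1$ the integer $1-2i$ is strictly negative, so the corresponding term is a negative $\mathrm{Ext}$ group between objects of the heart and therefore vanishes. This gives the asserted equality \ref{Ext1-const}. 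The main (and essentially only) obstacle is to verify that the isomorphism of Theorem \ref{IV.1.3} is compatible with both the identification of $\mathrm{Ext}^1$ as a derived $\mathrm{Hom}$ and with the adjunction $(f^+,f_+)$, so that the surviving summand $i=0$ is genuinely matched with the morphism $f^+[d]$ induces on $\mathrm{Ext}^1$; once this compatibility is in place, the annihilation of the remaining summands is pure t-structure bookkeeping.
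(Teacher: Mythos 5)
Your proof is correct and follows essentially the same route as the paper: part 1 via t-exactness of $f^{+}[d]$ for smooth $f$ together with Corollary \ref{III.7.7}, and part 2 by rewriting $\mathrm{Ext}^{1}$ as a shifted derived Hom, adjoining over to $\mathrm{Hom}(\E, f_{+}f^{+}\FF[1])$, and using the decomposition of Theorem \ref{IV.1.3} to see that only the $i=0$ summand contributes — which is exactly the paper's observation that $\FF[1]\riso\tau_{\leq 0}f_{+}f^{+}(\FF[1])$. The compatibility you flag at the end is automatic, since the $i=0$ component of the map \ref{KW-IV.1.3} is by construction the adjunction unit.
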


\begin{proof}
Since $f$ is smooth, 
the functor $f ^{+} [d]$ is t-exact. 
From \ref{III.7.7}, we get its full faithfulness. 
Since the canonical morphism
$\FF [1] \to  \tau _{\leq 0 }f _{+}f ^{+} (\FF [1])$
is an isomorphism (use \ref{KW-IV.1.3}), 
we get the second assertion (by using similar technics than in the proof of \ref{III.7.7}).
\end{proof}

\begin{rem}
\label{rmkFrob-const}
We keep the notation and hypotheses of \ref{III.7.7}.
Let $\E \in \mathrm{Hol} (S/K)$.
Since the pull back under Frobenius commutes with the functor 
$f ^{+} [d]$, 
then we get a bijection between Frobenius structures on $\E$ and 
Frobenius structures on $f ^{+} [d] (\E)$. 
Moreover, let $\FF , \G \in F \text{-}\mathrm{Hol} (S/K)$ and
$\phi \colon \FF \to \G$ a morphism of $\mathrm{Hol} (S/K)$.
Then $\phi$ commutes with Frobenius if and only if so is 
$f ^{+} [d] (\phi)$.
\end{rem}

\begin{prop}
\label{11.2}
We keep the notation and hypotheses of 
\ref{III.7.7}.
We suppose furthermore that the morphism $f$ has locally a section. 
\begin{enumerate}
\item The functor 
$f ^{+} [d]\colon (F \text{-}) \mathrm{Hol} (S/K) 
\to 
 (F \text{-})\mathrm{Hol} (X/K)$ sends simple objects to simple objects. 
 
\item The functor $f ^{+} [d]\colon (F \text{-}) \mathrm{Hol} (S/K) 
\to 
 (F \text{-})\mathrm{Hol} (X/K)$ has the right adjoint functor
 $\mathcal{H} ^{-d} _t \circ f _+
  \colon
 (F \text{-})\mathrm{Hol} (X/K) \to 
  (F \text{-})\mathrm{Hol} (S/K)$
and the left adjoint functor
$\mathcal{H} ^{d} _t \circ f _! (d) 
\colon
(F \text{-})\mathrm{Hol} (X/K) \to 
(F \text{-})\mathrm{Hol} (S/K)$.  

\end{enumerate}
\end{prop}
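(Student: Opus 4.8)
The plan is to treat part (2) first, since it is essentially formal and is the main input to part (1); throughout I use that $f^+[d]$ is t-exact and fully faithful by \ref{III.7.8}, and the decomposition isomorphism \ref{IV.1.3}. For the right adjoint I would compute, for $\E\in(F\text{-})\mathrm{Hol}(S/K)$ and $\FF\in(F\text{-})\mathrm{Hol}(X/K)$, using the adjoint pair $(f^+,f_+)$, that $\mathrm{Hom}_{D^{\mathrm{b}}_{\mathrm{hol}}(X/K)}(f^+[d]\E,\FF)\simeq\mathrm{Hom}_{D^{\mathrm{b}}_{\mathrm{hol}}(S/K)}(\E,(f_+\FF)[-d])$. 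Because $f^+[d]$ is t-exact it is in particular right t-exact, so its derived right adjoint $\FF\mapsto(f_+\FF)[-d]$ is left t-exact; as $\E$ lies in the heart this last group equals $\mathrm{Hom}_{\mathrm{Hol}(S/K)}(\E,\mathcal{H}^{0}_t((f_+\FF)[-d]))=\mathrm{Hom}_{\mathrm{Hol}(S/K)}(\E,\mathcal{H}^{-d}_t f_+\FF)$, identifying $\mathcal{H}^{-d}_t\circ f_+$ as the right adjoint on the hearts. For the left adjoint I would use Poincaré duality \ref{dual-smooth-morph} in the form $f^+[d]\simeq f^![-d](-d)$ and feed it into the pair $(f_!,f^!)$: this gives $\mathrm{Hom}(\FF,f^+[d]\E)\simeq\mathrm{Hom}_{D^{\mathrm{b}}_{\mathrm{hol}}(S/K)}((f_!\FF)[d](d),\E)$, and the right t-exactness of the derived left adjoint (dual to the previous computation) identifies $\mathcal{H}^{d}_t\circ f_!(d)$ as the left adjoint on the hearts. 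All of this commutes with Frobenius pullback, so the $F$-case is included (cf. \ref{rmkFrob-const}).

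For part (1), write $\FF:=f^+[d]\E$ with $\E$ simple, and let $R:=\mathcal{H}^{-d}_t f_+$ and $L:=\mathcal{H}^{d}_t f_!(d)$ be the adjoints just constructed. Full faithfulness gives $\mathrm{End}(\FF)=\mathrm{End}(\E)$, a division ring, so $\FF$ is indecomposable, and it gives the unit and counit isomorphisms $R\,f^+[d]\simeq\mathrm{id}\simeq L\,f^+[d]$, whence $R\FF=\E=L\FF$. Let $\iota\colon\G\hookrightarrow\FF$ be a nonzero subobject. Left exactness of $R$ yields a monomorphism $R\G\hookrightarrow R\FF=\E$, so $R\G=0$ or $R\G=\E$. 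If $R\G=\E$, naturality of the counit $\varepsilon\colon f^+[d]R\to\mathrm{id}$ along $\iota$ gives the relation $\iota\circ\varepsilon_{\G}=\varepsilon_{\FF}\circ f^+[d]R(\iota)$, in which $\varepsilon_{\FF}$ (by the triangle identity together with full faithfulness) and $f^+[d]R(\iota)$ are isomorphisms; hence the monomorphism $\iota$ is also an epimorphism, i.e. $\G=\FF$. The symmetric argument on quotients via $L$ shows that a proper quotient $\QQ$ of $\FF$ with $L\QQ=\E$ is all of $\FF$.

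It remains to rule out $R\G=0$ (equivalently the symmetric vanishing for $L$) for a nonzero proper subobject $\G$, and this is exactly where I would use that $f$ has locally a section. Working locally on $S$ one reduces to a genuine section $s\colon S\to X$ with $f\circ s=\mathrm{id}_S$: the restriction of the simple object $\E$ to an open $U\subseteq S$ is again simple or zero, and a subobject of $\FF$ which is $0$ or everything over each member of an open cover trivialising the section is globally $0$ or everything because the support of $\E$ is irreducible. Given a section, $s$ is a closed immersion of codimension $d$ of smooth varieties with $s^+ f^+\simeq\mathrm{id}$, and this retraction detects nonzero subobjects of the constant object $\FF$: the unit $\G\to s_+ s^+\G$ induces a natural morphism $f_+\G\to s^+\G$ which restricts on $\FF$ to the isomorphism $R\FF=\E$, forcing $R\G\neq0$ for $\G\neq0$, hence $R\G=\E$ and $\G=\FF$ by the preceding paragraph. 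This conservativity of $\mathcal{H}^{-d}_t f_+$ on subobjects of constant objects is the one non-formal point, and I expect it to be the main obstacle: one must control the failure of exactness of $s^+$ and make the local-to-global gluing on $S$ rigorous, whereas the adjoint bookkeeping of part (2) and the counit argument above are routine. The $F$-case is identical, the Frobenius structures corresponding under $f^+[d]$ by \ref{rmkFrob-const}.
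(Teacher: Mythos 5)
Your treatment of the second assertion is correct and is essentially the paper's own argument: the adjoint pairs $(f^{+}[d], f_{+}[-d])$ and $(f_{!}[d](d), f^{!}[-d](-d))$, Poincar\'e duality $f^{+}[d]\simeq f^{!}[-d](-d)$, and the half t-exactness of the derived adjoints. The problem is the first assertion, where the gap you yourself flag is genuine and sits exactly at the load-bearing step. After your (correct) counit argument, everything reduces to the claim that $R\G:=\mathcal{H}^{-d}_t f_{+}(\G)\neq 0$ for every nonzero subobject $\G$ of $\FF=f^{+}[d](\E)$. In view of Proposition \ref{ThmIII.11.3-4} (whose proof in the paper relies on the present proposition, so you cannot invoke it), $f^{+}[d]\circ\mathcal{H}^{-d}_t f_{+}(\G)$ is the largest constant subobject of $\G$; your claim therefore says that every nonzero subobject of the constant object $\FF$ contains a nonzero constant subobject, which for simple $\E$ is essentially equivalent to the simplicity of $\FF$ you are trying to prove. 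The mechanism you propose cannot establish it: the morphism $f_{+}\G\to f_{+}s_{+}s^{+}\G\simeq s^{+}\G$ points \emph{out of} $f_{+}\G$, so nonvanishing of $\mathcal{H}^{-d}_t s^{+}\G$ tells you nothing about $\mathcal{H}^{-d}_t f_{+}\G$; and since $s$ is a closed immersion of codimension $d$, the functor $s^{+}$ is only right t-exact, so $\mathcal{H}^{-d}_t s^{+}$ does not preserve monomorphisms and the comparison square between $\G$ and $\FF$ yields no injectivity to exploit. Nothing in your formal setup excludes a nonzero proper $\G$ with $R\G=0$ and $L\G=0$. (Your local-to-global reduction on $S$ to the case of an actual section is also only sketched, but that is a secondary issue.)

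The paper closes this gap with geometric rather than formal input, and it is worth seeing where the section hypothesis actually enters. Using \cite[1.4.9.(i)]{Abe-Caro-weights} one writes the simple object $\E$ as $u_{!+}(\E')$ for an irreducible overconvergent isocrystal $\E'$ on a dense smooth open $S'\subseteq S$; one checks $f^{+}[d]\circ u_{!+}\simeq v_{!+}\circ f^{\prime +}[d]$ (this uses the compatibility of $f^{+}$ with $u_{!}$ and, via Poincar\'e duality, with $u_{+}$, together with t-exactness), which reduces everything to the case where $S$ is smooth, affine, irreducible and $\E$ is an irreducible overconvergent isocrystal. Then any nonzero subobject $\G$ of $f^{+}[d](\E)$ is automatically an overconvergent isocrystal because these form a Serre subcategory of $\mathrm{Hol}(X/K)$, the variety $X$ is irreducible, and the invariance of the generic rank under pull-back along the local section forces $\G$ to have full generic rank, hence to equal $f^{+}[d](\E)$. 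So the section is used for a rank comparison of isocrystals, not for a conservativity statement about $\mathcal{H}^{-d}_t f_{+}$. If you wish to keep your formal skeleton, you still need to import this reduction to isocrystals (or some substitute, e.g. purity and semisimplicity) to prove your conservativity claim; as written, the proof of the first assertion is incomplete.
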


\begin{proof}
Let $\E$ be a simple object of $(F \text{-})\mathrm{Hol} (S/K)$.
From \cite[1.4.9.(i)]{Abe-Caro-weights} (without Frobenius structures, use the fact that 
$\mathrm{Isoc} ^{\dag \dag} (S/K)\cap \mathrm{Hol} (S/K)$ is a Serre subcategory of $\mathrm{Hol} (S/K)$, 
where $\mathrm{Isoc} ^{\dag \dag} (S/K)$ is constructed in \cite{caro-pleine-fidelite})
there exist an open dense smooth subscheme $S'$ of $S$, 
an irreducible object $\E' \in (F \text{-})\mathrm{Hol} (S'/K)$ 
which is also an object of $(F\text{-})\mathrm{Isoc} ^{\dag \dag} (S'/K)$
 such that 
$\E \riso u _{!+} (\E')$ where $u \colon S' \hookrightarrow S$ is the inclusion. 
Put $X ':= f ^{-1} (S')$, 
$f ' \colon X ' \to S'$, 
$v \colon X' \hookrightarrow X$.
By adjunction, we remark that the canonical morphism 
$u _{!} (\E') \to u _{+} (\E')$ is the only one so that 
we get the identity over $S'$.
With this remark,
since $f ^{+} [d] \circ u _{!} (\E') \riso v _{!} \circ f ^{\prime +} [d]  (\E') $, 
and $f ^{+} [d] \circ u _{+} (\E') 
\riso 
f ^{!} [-d](-d) \circ u _{+} (\E') 
\riso 
v _{+} \circ f ^{\prime !} [-d](-d)  (\E') 
\riso
v _{+} \circ f ^{\prime +} [d] (\E') $, 
since
the functors $f ^{ +} [d]$ and $f ^{\prime +} [d]$ are t-exact, 
then after applying 
$\mathcal{H} ^{0} _t$ to these isomorphisms
we get 
 the isomorphism
$f ^{+} [d]  \circ u _{!+} (\E')
\riso
v _{!+}   \circ f ^{\prime +} [d] (\E')$.
Since the functor $v _{!+}  $ preserves the irreducibility, 
then we reduce to the case where $S$ is affine, smooth, irreducible, and
where we have moreover $\E \in (F\text{-})\mathrm{Isoc} ^{\dag \dag} (S/K)$.
Hence 
$f ^{ +} [d] (\E) \in  (F\text{-})\mathrm{Isoc} ^{\dag \dag} (X/K)\cap (F \text{-})\mathrm{Hol} (X/K)$.
Let $0 \not =\G \in (F \text{-})\mathrm{Hol} (X/K) $ be a subobject of 
$f ^{ +} [d] (\E)$.
Since 
$(F\text{-})\mathrm{Isoc} ^{\dag \dag} (X/K)\cap (F \text{-})\mathrm{Hol} (X/K)$ is a Serre subcategory of 
$(F\text{-})\mathrm{Hol} (X/K)$, then
$\G  \in  (F\text{-})\mathrm{Isoc} ^{\dag \dag} (X/K)$. 
Since the generic rank of an overconvergent isocrystal is preserved under pull-backs, 
since $f$ has locally a section, since $X$ is irreducible (because the fibers of $f$ are irreducible) then we can conclude that
$\G= f ^{ +} [d] (\E) $ and hence 
$f ^{ +} [d] (\E) $ is a simple object.

The last part comes from the left t-exactness of $f _+ [-d]$
and the right t-exactness of $f _! [d](d)$, from the fact that the couples
$(f ^{+} [d], f _+ [-d])$ 
and 
$( f _! [d](d), f ^{!} [-d] (-d))$ 
are adjoint functors
and from the isomorphism
$f ^{+} [d] \riso f ^{!} [-d] (-d)$ of \ref{dual-smooth-morph}.
\end{proof}

\begin{prop}
\label{ThmIII.11.3-4}
We keep the notation and hypotheses of 
\ref{11.2}.
Let $\E \in  (F \text{-})\mathrm{Hol} (X/K) $.
\begin{enumerate}
\item The category of constant objects with respect to $f$ is a thick subcategory of 
$ (F \text{-})\mathrm{Hol} (X/K) $. 

\item 
The object
$\mathcal{H} ^{0} _t \circ f ^{+} \circ f _{+} (\E)=
f ^{+} [d]\circ (\mathcal{H} ^{-d} _t  f _{+} )(\E)$
is the largest constant with respect to $f$ subobject of $\E$ in
$(F \text{-})\mathrm{Hol} (X/K) $.

\item The object
$\mathcal{H} ^{2d} _t \circ f ^{+} \circ f _{!} (\E(d))=
f ^{+} [d]\circ (\mathcal{H} ^{d} _t  f _{!} )(\E(d))$
is the largest constant with respect to $f$ quotient object of $\E$ in
$(F \text{-})\mathrm{Hol} (X/K) $.

\end{enumerate}

\end{prop}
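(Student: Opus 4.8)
The plan is to treat part (1) first as a purely categorical consequence of the properties of $f^{+}[d]$ already established, then to deduce (2) from (1) by an adjunction argument, and finally to obtain (3) from (2) by Verdier duality.

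For (1), recall that a thick subcategory is a full subcategory closed under subobjects, quotients and extensions. Closure under extensions is immediate from \ref{III.7.8}: given an exact sequence $0 \to f^{+}[d]\mathcal{A} \to \E \to f^{+}[d]\mathcal{B} \to 0$ in $\mathrm{Hol}(X/K)$, its class lies in $\mathrm{Ext}^{1}_{\mathrm{Hol}(X/K)}(f^{+}[d]\mathcal{B}, f^{+}[d]\mathcal{A})$, which by \ref{Ext1-const} is identified via $f^{+}[d]$ with $\mathrm{Ext}^{1}_{\mathrm{Hol}(S/K)}(\mathcal{B},\mathcal{A})$; the corresponding extension downstairs pulls back to $\E$, so $\E$ is constant. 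For closure under subobjects I would argue by induction on the length of $\mathcal{G}$ (holonomic objects form a finite length category), using that $f^{+}[d]$ is exact, fully faithful, and sends simple objects to simple objects (\ref{11.2}(1)): a subobject of $f^{+}[d]\mathcal{G}$ meets the image of a simple subobject $f^{+}[d]\mathcal{G}_{0}$ in either $0$ or $f^{+}[d]\mathcal{G}_{0}$, and maps onto a subobject of $f^{+}[d](\mathcal{G}/\mathcal{G}_{0})$ which is constant by induction; closure under extensions then forces the given subobject to be constant. Closure under quotients is automatic afterwards, since a constant subobject $f^{+}[d]\mathcal{K}' \hookrightarrow f^{+}[d]\mathcal{G}$ is $f^{+}[d]$ of a monomorphism $\mathcal{K}' \hookrightarrow \mathcal{G}$ and $f^{+}[d]$ preserves cokernels.

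For (2), write $R := \mathcal{H}^{-d}_{t} f_{+}$ for the right adjoint of $f^{+}[d]$ from \ref{11.2}(2), and let $c\colon f^{+}[d]R(\E) \to \E$ be the counit. Since $f^{+}[d]$ is fully faithful the unit $\mathrm{id} \riso R f^{+}[d]$ is an isomorphism, so by the triangle identity $R(c)$ is an isomorphism. To see that $c$ is a monomorphism, set $\mathcal{N} := \ker c$; by part (1) the object $\mathcal{N}$ is constant, say $\mathcal{N} = f^{+}[d]\mathcal{N}'$, and applying the left exact functor $R$ to the sequence $0 \to \mathcal{N} \to f^{+}[d]R(\E) \xrightarrow{c} \E$ yields $\mathcal{N}' = R(\mathcal{N}) = \ker R(c) = 0$, whence $\mathcal{N}=0$. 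Thus $f^{+}[d]R(\E)$ is a constant subobject of $\E$, and it is the largest one: any constant subobject $f^{+}[d]\mathcal{G} \hookrightarrow \E$ corresponds by adjunction to a morphism $\mathcal{G} \to R(\E)$ through which it factors, so being a monomorphism it defines a subobject of $f^{+}[d]R(\E)$. Finally the t-exactness of $f^{+}[d]$ identifies $f^{+}[d]R(\E)$ with $\mathcal{H}^{0}_{t} f^{+} f_{+}(\E)$, giving the stated formula.

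For (3) I would apply the dual functor $\DD_{X}$, which is t-exact, contravariant, exchanges subobjects and quotients, and—using $f^{!} \riso f^{+}[2d](d)$ (from \ref{dual-smooth-morph}) together with $\DD_{X} f^{+} \riso f^{!} \DD_{S}$—sends constant objects to constant objects. Hence the largest constant quotient of $\E$ is $\DD_{X}$ applied to the largest constant subobject of $\DD_{X}\E$, which by (2) is $\mathcal{H}^{0}_{t} f^{+} f_{+}(\DD_{X}\E)$; using moreover $\DD_{S} f_{+} \riso f_{!} \DD_{X}$ one computes $\DD_{X}\, \mathcal{H}^{0}_{t} f^{+} f_{+}(\DD_{X}\E) \riso \mathcal{H}^{0}_{t} f^{!} f_{!}(\E) \riso \mathcal{H}^{2d}_{t} f^{+} f_{!}(\E(d))$, as asserted. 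The main obstacle is the subobject-closure step of (1): this is where the geometry genuinely enters, resting on the preservation of simplicity by $f^{+}[d]$ (\ref{11.2}(1)), itself a consequence of the projective fibration hypothesis, the existence of local sections, and the decomposition of \ref{IV.1.3}. Once (1) is in hand, parts (2) and (3) are formal.
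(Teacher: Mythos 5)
Your argument is correct and follows essentially the same route as the paper: the paper gets closure under extensions from the $\mathrm{Ext}^1$-equality \ref{Ext1-const} and then delegates the remaining categorical content (closure under subquotients via preservation of simple objects, and the identification of the largest constant subobject and quotient via the adjoints of \ref{11.2}) to the general lemma \cite[III.11.1]{KW-Weilconjecture}, which is exactly what you unpack by hand. Two small remarks. First, for part (3) the paper's intended route uses the left adjoint $\mathcal{H}^{d}_t f_!(d)$ supplied by \ref{11.2} and runs the argument dual to your part (2) inside the same abelian category, whereas you reduce to part (2) by Verdier duality; both work, yours at the extra cost of checking that $\DD$ preserves constancy, which you do correctly via \ref{dual-smooth-morph}. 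Second, the proposition is asserted for $(F\text{-})\mathrm{Hol}(X/K)$, while \ref{Ext1-const} is only stated without Frobenius structures; to obtain thickness (and the maximality statements) in the Frobenius-equivariant category you still need the observation of Remark \ref{rmkFrob-const} that Frobenius structures on $f^{+}[d](\E)$ biject with Frobenius structures on $\E$, and that a morphism commutes with Frobenius if and only if its pullback does, so that the extension, subobject or quotient you descend to $S$ inherits a compatible Frobenius structure. This is a one-line addition rather than a genuine gap, but it is the one step of the paper's proof your write-up omits.
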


\begin{proof}
The thickness of the category of constant objects without Frobenius structures comes from the equality \ref{Ext1-const}.
With the remark \ref{rmkFrob-const}, we get the thickness with Frobenius structures.
The rest is similar to the proof of \cite[III.11.3]{KW-Weilconjecture} i.e. this
comes from the general fact \cite[III.11.1]{KW-Weilconjecture} and from \ref{11.2}.
\end{proof}

\section{The Brylinski-Radon transform and the Hard Lefschetz Theorem}
Let $d \geq 1$ be an integer and 
$\P ^d $ be the $d$-dimensional projective space defined over $k$, 
let $\check{\P} ^{d}$ be the dual projective space over $k$, which parameterizes the hyperplanes in $\P ^d$, 
let $H$ be the universal incidence relation, i.e. the closed subvariety of $\P ^d \times \check{\P} ^{d}$ so that $(x,h) \in H$ 
if and only if the point $x \in h$. Let $Y$ be a realizable $k$-variety. 
We denote by $i \colon H \times Y  \hookrightarrow \P ^d \times \check{\P} ^{d} \times Y$ the canonical immersion
and
$p _1 \colon \P ^d \times \check{\P} ^{d} \times Y \to \P ^d  \times Y$, 
$p _2 \colon \P ^d \times \check{\P} ^{d} \times Y \to \check{\P} ^{d} \times Y$, 
$\widetilde{p} _1 \colon \check{\P}  ^d \times Y \to  Y$, 
$\widetilde{p} _2 \colon \P ^d \times Y \to Y$
the canonical projections and 
$\pi _1 := p _1 \circ i$,
$\pi _2 := p _2 \circ i$.

\begin{dfn}
\label{B-R-transf-def}
We define the Brylinski-Radon transform
$Rad \colon F \text{-}D ^\mathrm{b} _\mathrm{hol}  (\P ^{d} \times Y/K)
\to 
F \text{-}D ^\mathrm{b} _\mathrm{hol}  (\check{\P} ^{d} \times Y/K)$ 
by posing, for any $\E \in F \text{-}D ^\mathrm{b} _\mathrm{hol}  (\P ^{d} \times Y/K)$,
\begin{equation}
\label{B-R-transf-Rad}
Rad (\E) := \pi _{2+} \pi _1 ^{+} (\E) [d-1].
\end{equation}
For any $n \in \Z$, we put 
$Rad ^{n}:= \mathcal{H} _t ^{n} \circ Rad$.

\end{dfn}

\begin{dfn}
Let $U$ be the open complement of the closed subvariety $H \times Y$ in 
$\P ^{d}  \times \check{\P}  ^{d} \times Y$. Let $j \colon U \hookrightarrow \P ^{d}  \times \check{\P}  ^{d} \times Y$ be the open immersion
and $q _1:= p _1 \circ  j $, $q _2:= p _2 \circ  j$.
We define the modified Radon transform 
$Rad _!\colon 
F \text{-}D ^\mathrm{b} _\mathrm{hol}  (\P ^{d} \times Y/K)
\to 
F \text{-}D ^\mathrm{b} _\mathrm{hol}  (\check{\P} ^{d} \times Y/K)$ 
by posing,
for any $\E \in F \text{-}D ^\mathrm{b} _\mathrm{hol}  (\P ^{d} \times Y/K)$,
\begin{equation}
Rad _! (\E) 
:= q _{2!} \circ q _{1} ^{+} [d] (\E).
\end{equation}
For any integer $n \in \Z$, 
we put
$Rad ^{n }_{!} (\E)  := \mathcal{H} _t ^{n} Rad _{!} (\E) $.

\end{dfn}

\begin{empt}
\label{Rad!properties}
\begin{enumerate}
\item \label{Rad!properties1} Since the functor $ q _{1} ^{+} [d]$ is t-exact and 
the functor $q _{2!}$ is left t-exact (because $q _2$ is affine, e.g. see \cite[1.3.13]{Abe-Caro-weights} but this is obvious here
since $q _2$ is moreover smooth)
then $Rad _!$ is left t-exact.

\item The exact triangle 
$i _{+} \circ i ^{+}[-1]\to j _{!} j ^{+} \to id \to i _{+} \circ i ^{+}$
induces for any $\E \in F \text{-}D ^\mathrm{b} _\mathrm{hol}  (\P ^{d} \times Y/K)$ the exact triangle
\begin{equation}
\label{Rad-Rad!}
Rad (\E) \to Rad _{!} (\E) \to p _{2 +} p _{1} ^{+} [d] (\E) 
\to Rad (\E) [1].
\end{equation}

\end{enumerate}

\end{empt}

\begin{lem}
\label{IV.2.6}
Let $\E \in F \text{-}D ^\mathrm{b} _\mathrm{hol}  (\P ^{d} \times Y/K)$.
\begin{enumerate}
\item We have the isomorphism $\E [-2d](-d) \riso q _{1!}  q _1^{+} (\E) $.
\item We have the isomorphism $\widetilde{p} _{1+} (Rad  _{!}(\E)) \riso \widetilde{p} _{2+} (\E) [-d] (-d)$.
\item \label{IV.2.6-3} If  $\E \in  (F \text{-})D ^{\geq 0}  (\P ^{d} \times Y/K)$, then
$\widetilde{p} _{1+}(Rad  _{!}(\E)) \in 
(F \text{-})D ^{\geq 0}  (Y/K)$.
\end{enumerate}
\end{lem}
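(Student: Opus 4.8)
The three assertions are proved in order, part (1) being the geometric core and parts (2)--(3) formal consequences of it together with properness and Poincar\'e duality. For part (1), the plan is to observe that $q_1\colon U \to \P^d \times Y$ is an affine bundle with $d$-dimensional fibres: over a point $(x,y)$ the fibre is the set of hyperplanes $h$ with $x \notin h$, i.e.\ the complement $\check{\P}^d \setminus \check{x}$ of a hyperplane in $\check{\P}^d$, which is an $\A^d$; moreover this bundle is Zariski-locally trivial, being literally a product over the standard charts of $\P^d$. I would first reduce to the case $\E = K_{\P^d \times Y}$ via the projection isomorphism of \cite[A.6]{Abe-Caro-weights}: since $q_1^+(\E) \riso q_1^+(\E) \otimes K_U$ by \ref{def-K_X}, one gets $q_{1!}q_1^+(\E) \riso \E \otimes q_{1!}(K_U)$. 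Then, because the formation of $q_{1!}(K_U) = q_{1!}q_1^+(K_{\P^d \times Y})$ is local on the base and the bundle trivialises to $\A^d_{\P^d \times Y} \to \P^d \times Y$ over a suitable cover, Lemma \ref{H0q+const} yields $q_{1!}(K_U) \riso K_{\P^d \times Y}[-2d](-d)$ (the Frobenius twist $(-d)$ being the one carried by the trace map of \ref{dual-smooth-morph} for an $\A^d$-fibration). Tensoring back with $\E$ and using \ref{def-K_X} gives $\E[-2d](-d) \riso q_{1!}q_1^+(\E)$.

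For part (2), I would exploit that the two projections of $U$ onto $Y$ coincide: writing $r\colon U \to Y$ for the structural morphism, one has $r = \widetilde{p}_1 \circ q_2 = \widetilde{p}_2 \circ q_1$. Since $\widetilde{p}_1$ is proper, $\widetilde{p}_{1+} = \widetilde{p}_{1!}$, so by functoriality of the extraordinary pushforward $\widetilde{p}_{1+}(Rad_!(\E)) = \widetilde{p}_{1!}q_{2!}q_1^+[d](\E) = r_! q_1^+[d](\E)$. Factoring $r_! = \widetilde{p}_{2!}q_{1!}$ and invoking part (1) gives $r_! q_1^+[d](\E) \riso \widetilde{p}_{2!}(\E[d][-2d](-d)) = \widetilde{p}_{2!}(\E)[-d](-d)$, and since $\widetilde{p}_2$ is proper this is $\widetilde{p}_{2+}(\E)[-d](-d)$, the desired isomorphism.

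For part (3), I would simply combine part (2) with the cohomological amplitude of $\widetilde{p}_{2+}$. As $\widetilde{p}_2\colon \P^d \times Y \to Y$ is smooth proper of relative dimension $d$, the functor $\widetilde{p}_2^+[d]$ is t-exact by \ref{dual-smooth-morph}; hence its right adjoint $\widetilde{p}_{2+}[-d]$ is left t-exact, which means $\widetilde{p}_{2+}$ carries $(F\text{-})D^{\geq 0}(\P^d \times Y/K)$ into $(F\text{-})D^{\geq -d}(Y/K)$. Therefore, for $\E \in (F\text{-})D^{\geq 0}(\P^d \times Y/K)$, one has $\widetilde{p}_{2+}(\E)[-d](-d) \in (F\text{-})D^{\geq 0}(Y/K)$, and part (2) identifies this object with $\widetilde{p}_{1+}(Rad_!(\E))$.

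The main obstacle is part (1): Lemma \ref{H0q+const} is available only for the trivial fibration $\A^d_S \to S$, so the genuine content is the passage from this to the nontrivial (though Zariski-locally trivial) affine bundle $q_1$, which is exactly what forces the reduction through the projection isomorphism together with a local-triviality argument. Once the clean formula $\E[-2d](-d) \riso q_{1!}q_1^+(\E)$ is established, parts (2) and (3) are purely formal manipulations with the six operations, properness, and the t-exactness coming from Poincar\'e duality.
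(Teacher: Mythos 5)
Your parts (2) and (3) coincide with the paper's argument: the paper likewise uses $\widetilde{p}_{1!}\circ q_{2!} \riso \widetilde{p}_{2!}\circ q_{1!}$ (both equal to the extraordinary pushforward along the common structural morphism of $U$) and then the left t-exactness of $\widetilde{p}_{2!}[-d]$. The problem is part (1). Your opening reduction to $\E = K_{\P^d\times Y}$ via the projection isomorphism is also the paper's first step, but the rest of your argument --- ``the formation of $q_{1!}(K_U)$ is local on the base, the bundle trivialises, so Lemma \ref{H0q+const} gives $q_{1!}(K_U)\riso K_{\P^d\times Y}[-2d](-d)$'' --- has a genuine gap: \emph{being isomorphic} is not a property that can be checked Zariski-locally. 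Lemma \ref{H0q+const} hands you, over each trivialising open $V$, \emph{some} isomorphism $\mathcal{H}^{2d}_t q_!q^+(K_V)\riso K_V$ together with the vanishing of the other cohomology sheaves; the vanishing glues (it is a statement about cohomology sheaves, hence local), but the degree-$2d$ isomorphisms do not glue unless they are all restrictions of one globally defined morphism. Your parenthetical about the trace map points at the right candidate (the composite $q_{1!}q_1^+\riso q_{1!}q_1^![-2d](-d)\to \mathrm{id}[-2d](-d)$ coming from \ref{dual-smooth-morph} and the counit), but then you would still need to know that for the trivial bundle $\A^d_V\to V$ it is the \emph{trace map} that realises the isomorphism of Lemma \ref{H0q+const}; that lemma only asserts existence of an isomorphism, via an unspecified de Rham cohomology computation.

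The paper circumvents exactly this point by a different device. After reducing further to $Y=\Spec k$ by base change, it places $q_{1!}q_1^+(\G)$ in the localisation triangle $q_{1!}q_1^+(\G)\to p_{1+}p_1^+(\G)\to \pi_{1+}\pi_1^+(\G)\to +1$ and compares this, by means of the \emph{globally defined} morphisms $\oplus_i\eta^i$ and $\oplus_i\widetilde{\eta}^i$ of Theorem \ref{IV.1.3} (applied to the $\P^d$-fibration $p_1$ and the $\P^{d-1}$-fibration $\pi_1=p_1\circ i$), with the split triangle $\G[-2d](-d)\to\oplus_{i=0}^{d}\G[-2i](-i)\to\oplus_{i=0}^{d-1}\G[-2i](-i)\to +1$. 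Applying $\tau_{\geq 2d}$ then exhibits $q_{1!}q_1^+(\G)\riso\G[-2d](-d)$ through a canonical map; Lemma \ref{H0q+const} enters only through the genuinely local vanishing statement that identifies $q_{1!}q_1^+(\G)$ with its truncation $\tau_{\geq 2d}$. If you wish to keep your more direct route, you must single out a global comparison morphism and prove that it restricts, over a trivialising cover, to the isomorphism of \ref{H0q+const}; otherwise you should adopt the paper's detour through Theorem \ref{IV.1.3}.
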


\begin{proof}
We put $\eta = \eta _{p _1, p _1 ^{+}(\E) [-2] (-1)}$
and $ \widetilde{\eta} ^{i}:= i ^{+} (\eta ^{i}) $.
In order to check the first isomorphism, 
by using the remark of the paragraph \ref{def-K_X} and the projection formula (see  \cite[A.6]{Abe-Caro-weights})
we get $q _{1!}  q _1^{+} (\E) \riso q _{1!} ( q _1^{+} (K _U) \otimes q _1^{+} (\E) ) 
\riso q _{1!}  q _1^{+} (K _U) \otimes \E$.
Hence, we can suppose $\E = K _{\P ^{d} \times Y}$.
Then, by using a base change theorem, we can suppose $Y = \Spec k$.
We put $\G = K _{\P ^{d}} [-d]\in F \text{-}\mathrm{Hol}  (\P ^{d} /K)$.
Consider the diagram below
\begin{equation}
\label{IV.2.6-diag2}
\xymatrix {
{q _{1!}  q ^{+} _1 (\G) } 
\ar[r] ^-{}
& 
{p _{1+} p _{1} ^{+}(\G)} 
\ar[r] ^-{\mathrm{adj}}
& 
{ p _{1+} i _{+} i ^{+}  p _{1} ^{+}(\G)} 
\ar[r] ^-{+}
&
{}
\\
{\G [-2d] (-d) } 
\ar[r] ^-{}
& 
{\bigoplus _{i=0} ^{d} \G [-2i] (-i) } 
\ar[r] ^-{}
\ar[u] _-{\oplus _{i=0} ^{d} \eta ^{i}} ^-{\sim}
& 
{\bigoplus _{i=0} ^{d-1} \G [-2i] (-i) } 
 \ar[u] _-{\oplus _{i=0} ^{d-1} \widetilde{\eta} ^{i}} ^-{\sim}
 \ar[r] ^-{+}
&
{,}
}
\end{equation}
where the rows are  exact triangles and where we keep 
the abuse of notation in \ref{IV.1.3}, i.e.
the morphism
$\eta ^{i}$ (resp. $\widetilde{\eta} ^{i}$) 
means the morphism induced by adjunction with respect to the couple 
$(p _{1} ^+ , p _{1+})$ 
(resp. $(q _{1} ^+ , q _{1+})$) from $\eta ^{i}$ (resp. $\widetilde{\eta} ^{i}$).
By transitivity of the adjunction, we get that the square of \ref{IV.2.6-diag2} is commutative. 
Moreover, we recall that the vertical arrows are isomorphisms thanks to \ref{KW-IV.1.3}. 
By applying the functor $\tau _{\geq 2d}$ to the diagram \ref{IV.2.6-diag2}, 
we get 
$q _{1!}  q ^{+} _1 (\G)  \underset{\ref{H0q+const}}{\riso} 
\tau _{\geq 2d} q _{1!}  q ^{+} _1 (\G) 
\riso 
\tau _{\geq 2d}p _{1+} p _{1} ^{+}(\G)
\liso 
\tau _{\geq 2d}
\bigoplus _{i=0} ^{d} \G [-2i] (-i) 
\liso
\G [-2d] (-d) $, which finishes the proof of the first isomorphism.
We get the second isomorphism from the first one by composition:
$$\widetilde{p} _{1+}(Rad  _{!}(\E)) 
\riso
\widetilde{p} _{1!} \circ q _{2!} \circ q _{1} ^{+} [d] (\E)
\riso
\widetilde{p} _{2!} \circ q _{1!} \circ q _{1} ^{+} [d] (\E)
\riso 
\widetilde{p} _{2!} (\E )[-d](-d).$$
Finally, since $\widetilde{p} _{2!}[-d]$ is left t-exact, we obtain the third property from the second one.
\end{proof}

\begin{lem}
\label{IV.2.7}
Let 
$\E \in (F \text{-})D ^{\geq 0}  (\P ^{d} \times Y/K)$.
Then $ Rad ^{0} _{!}(\E) $ 
is left reduced with respect to $\widetilde{p} _{1}$, i.e. does not have any nontrivial constant with respect to $\widetilde{p} _{1}$
subobject. 
\end{lem}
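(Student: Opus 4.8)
The plan is to reduce the statement to the vanishing of a single cohomology group. Write $M := Rad^0_!(\E) = \mathcal{H}^0_t Rad_!(\E)$. Since $\E \in (F\text{-})D^{\geq 0}(\P^d\times Y/K)$ and $Rad_!$ is left t-exact by \ref{Rad!properties}.\ref{Rad!properties1}, we have $Rad_!(\E) \in (F\text{-})D^{\geq 0}(\check{\P}^d\times Y/K)$, so $M = \tau_{\leq 0}Rad_!(\E)$ fits into the truncation triangle $M \to Rad_!(\E) \to \tau_{\geq 1}Rad_!(\E) \to M[1]$. The projection $\widetilde{p}_1\colon \check{\P}^d\times Y \to Y$ is a smooth projective $\P^d$-fibration admitting a section, so the hypotheses of \ref{11.2} and \ref{ThmIII.11.3-4} are satisfied for $\widetilde{p}_1$. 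By \ref{ThmIII.11.3-4}, the largest constant (with respect to $\widetilde{p}_1$) subobject of $M$ is $\widetilde{p}_1^+[d]\big(\mathcal{H}^{-d}_t\widetilde{p}_{1+}(M)\big)$; hence it is enough to prove $\mathcal{H}^{-d}_t\widetilde{p}_{1+}(M) = 0$, for then $M$ has no nonzero constant subobject, i.e. is left reduced with respect to $\widetilde{p}_1$.

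To obtain this vanishing, I would apply $\widetilde{p}_{1+}$ to the triangle above and read off the long exact cohomology sequence around degree $-d$, namely $\mathcal{H}^{-d-1}_t\widetilde{p}_{1+}(\tau_{\geq 1}Rad_!(\E)) \to \mathcal{H}^{-d}_t\widetilde{p}_{1+}(M) \to \mathcal{H}^{-d}_t\widetilde{p}_{1+}(Rad_!(\E))$. The right-hand term vanishes: by \ref{IV.2.6}.\ref{IV.2.6-3} we have $\widetilde{p}_{1+}(Rad_!(\E)) \in D^{\geq 0}(Y/K)$, and $-d < 0$ since $d \geq 1$. The left-hand term vanishes because $\widetilde{p}_1$ has relative dimension $d$, so $\widetilde{p}_{1+}[-d]$ is left t-exact (as recalled in the proof of \ref{11.2}); hence $\widetilde{p}_{1+}(\tau_{\geq 1}Rad_!(\E)) \in D^{\geq 1-d}(Y/K)$, and its cohomology in degree $-d-1 < 1-d$ is zero. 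Squeezed between two zero groups, $\mathcal{H}^{-d}_t\widetilde{p}_{1+}(M)$ vanishes, which completes the argument.

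Equivalently, and perhaps more transparently, one can bypass \ref{ThmIII.11.3-4} and use only that $\mathcal{H}^{-d}_t\widetilde{p}_{1+}$ is a right adjoint of $\widetilde{p}_1^+[d]$ (see \ref{11.2}): for any $\G$ this gives $\mathrm{Hom}(\widetilde{p}_1^+[d](\G), M) \cong \mathrm{Hom}(\G, \mathcal{H}^{-d}_t\widetilde{p}_{1+}(M)) = 0$, so every morphism from a constant object into $M$ is zero and in particular no constant subobject of $M$ can be nontrivial. The main obstacle is the t-amplitude bookkeeping in the middle step: one must use $d \geq 1$ to push the relevant degree $-d$ strictly below the $D^{\geq 0}$-bound coming from \ref{IV.2.6}.\ref{IV.2.6-3}, while simultaneously invoking the left t-exactness of $\widetilde{p}_{1+}[-d]$ to kill the neighbouring term; once both bounds are in place the conclusion is formal.
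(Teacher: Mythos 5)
Your proof is correct and follows essentially the same route as the paper: reduce via Proposition \ref{ThmIII.11.3-4} to the vanishing of $\mathcal{H}^{-d}_t\widetilde{p}_{1+}(Rad^0_!(\E))$, compare with $\mathcal{H}^{-d}_t\widetilde{p}_{1+}(Rad_!(\E))$ using the left t-exactness of $Rad_!$ and of $\widetilde{p}_{1+}[-d]$, and conclude by Lemma \ref{IV.2.6}.\ref{IV.2.6-3}. The only cosmetic difference is that you squeeze the group between two zero terms of the long exact sequence where the paper asserts the isomorphism directly, and your adjunction-based variant at the end is just the content of \ref{ThmIII.11.3-4} unwound.
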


\begin{proof}
The proof is the same than \cite[IV.2.7]{KW-Weilconjecture}: 
from Proposition \ref{ThmIII.11.3-4} we reduce to prove that 
$\mathcal{H} ^{-d} _t  \widetilde{p} _{1+} (Rad ^{0} _{!}(\E))=0$.
Since 
$Rad _{!}(\E) \in  (F \text{-})D ^{\geq 0}  (\check{\P} ^{d} \times Y/K)$ 
(see the property \ref{Rad!properties}.\ref{Rad!properties1}), 
since $\mathcal{H} ^{-d} _t  \widetilde{p} _{1+} $ is left t-exact, 
then we get the isomorphism
$\mathcal{H} ^{-d} _t  \widetilde{p} _{1+} (Rad ^{0} _{!}(\E)) 
\riso \mathcal{H} ^{-d} _t  \widetilde{p} _{1+} (Rad _{!}(\E))$. 
We conclude by using \ref{IV.2.6}.\ref{IV.2.6-3}

\end{proof}

\begin{empt}
\label{CherHypplane}
Let $f \colon X \to Y$ be a projective morphism of realizable varieties. 
Let $\E \in F \text{-}D ^\mathrm{b} _\mathrm{hol}  (X/K)$. 
A morphism $\eta \colon \E\to \E [2](1)$ is called 
a ``Chern class of a relative hyperplane for the projective morphism $f$''
if there exists a closed immersion
$\iota \colon X \hookrightarrow  \P ^{d} _Y$ 
so that
$f = \pi \circ \iota$, 
where 
$\pi \colon \P ^{d} _Y \to Y$ is the canonical projection,
and so that, with the notation \ref{pre-eta-smooth-proj}, 
$$\eta = id _{\E} \otimes  \iota ^{+} (\eta _{\pi, K _{\P ^{d} _Y}}).$$
By using the projection isomorphisms (see  \cite[A.6]{Abe-Caro-weights}),
we remark that 
$\iota _{+} (\eta) \colon \iota _{+} (\E ) \to  \iota _{+} (\E ) [2](1)$
is canonically isomorphic to  $\mathrm{id} _{\iota _{+} (\E )} \otimes \eta _{\pi, K _{\P ^{d} _Y}}$.

\end{empt}

\begin{theo}
[Hard Leftschetz Theorem]
\label{HLT-IV.4.1}
Let $f \colon X \to Y$ be a projective morphism of realizable varieties. 
Let 
$\E \in F \text{-}\mathrm{Hol}  (X/K)$ be an $\iota$-pure module
and 
$\eta  \colon \E \to \E [2] (1)$
be a Chern class of a relative hyperplane for $f$ (see the definition of \ref{CherHypplane}).
For any positive integer $r$, 
we obtain by composition 
$\eta ^{r}\colon \E \to \E [2r] (r)$.
We get the homomorphism
\begin{equation}
\label{HLT-IV.4.1eta-r}
\mathcal{H} _t ^{-r} f _+ (\eta ^{r}) \colon \mathcal{H} _t ^{-r} f _{+} (\E ) 
\to 
\mathcal{H} _t ^{r} f _{+} (\E ) (r).
\end{equation}
The homomorphism \ref{HLT-IV.4.1eta-r} is an isomorphism.
\end{theo}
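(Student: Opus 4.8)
The plan is to follow the $l$-adic argument of \cite[IV.4.1]{KW-Weilconjecture}, reducing first to a projective bundle and then treating the constant and the ``reduced'' cases separately. I would begin with the standard reductions. By the definition of a Chern class of a relative hyperplane (\ref{CherHypplane}) there is a closed immersion $\iota\colon X\hookrightarrow \P^{N}_Y$ with $f=\pi\circ\iota$ and $\iota_{+}(\eta)\cong \mathrm{id}_{\iota_{+}\E}\otimes\eta_{\pi,K_{\P^{N}_Y}}$; since $\iota_{+}\riso\iota_{!}$ is $t$-exact and preserves $\iota$-purity, we get $\mathcal{H}^{\pm r}_t f_{+}(\E)\riso\mathcal{H}^{\pm r}_t\pi_{+}(\iota_{+}\E)$ compatibly with the Lefschetz operators, so I may assume $X=\P^{d}_Y$, that $f=\pi$ is the projection and that $\eta=\eta_{\pi}$ is the standard relative hyperplane class, with $\E$ an $\iota$-pure module. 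By the semi-simplicity theorem \cite[4.3.1]{Abe-Caro-weights}, $\E$ is a direct sum of simple pure modules, so I may assume $\E$ simple. Finally, Poincar\'e--Verdier duality (a consequence of the trace maps underlying \ref{dual-smooth-morph}) gives $\mathcal{H}^{r}_t f_{+}(\mathbb{D}\E)\cong\mathbb{D}(\mathcal{H}^{-r}_t f_{+}\E)$ and is compatible with $\eta$; as $\mathbb{D}\E$ is again pure and simple, it suffices to prove that $\eta^{r}$ is a monomorphism for every pure simple $\E$, the dual statement then yielding the epimorphism and hence the isomorphism.

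By Proposition \ref{ThmIII.11.3-4} a simple $\E$ is either constant with respect to $\pi$ or has neither a nontrivial constant subobject nor a nontrivial constant quotient (a \emph{reduced} object). In the constant case $\E\cong f^{+}[d](\G)$ with $\G$ a simple module on $Y$, so that $f_{+}\E\cong (f_{+}f^{+}\G)[d]$; Theorem \ref{IV.1.3} identifies $f_{+}f^{+}(\G)$ with $\bigoplus_{i=0}^{d}\G[-2i](-i)$ via $\bigoplus_{i}\eta^{i}$, exhibiting $f_{+}\E$ as a sum of Tate twists of $\G$ on which $\eta$ raises the perverse degree by two. Hence $\eta^{r}$ identifies the summand in perverse degree $-r$ with that in degree $r$ and is an isomorphism. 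This disposes of the constant case and anchors the induction on $d$ that I will run in the reduced case, the base case $d=0$ (where $f$ is the identity) being trivial.

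For a reduced simple $\E$ I would pass to the Brylinski--Radon transform of \ref{B-R-transf-def} and argue by induction on $d$. The engine is the triangle \ref{Rad-Rad!}, which shows that $Rad(\E)$ and $Rad_{!}(\E)$ differ only by $p_{2+}p_{1}^{+}[d](\E)$, an object constant with respect to $\widetilde p_{1}$; combined with the left $t$-exactness of $Rad_{!}$ (\ref{Rad!properties}), the left-reducedness of $Rad^{0}_{!}(\E)$ (\ref{IV.2.7}) and the identification $\widetilde p_{1+}(Rad_{!}\E)\riso\widetilde p_{2+}(\E)[-d](-d)$ of \ref{IV.2.6}, this lets me re-express the perverse cohomology of $\pi_{+}\E=\widetilde p_{2+}\E$ through the Radon transform on the dual space $\check{\P}^{d}_Y$, whose fibres over $\check{\P}^{d}$ compute the cohomology of $\E$ restricted to hyperplanes, i.e. pushforwards from $\P^{d-1}_Y$. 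The main obstacle is exactly here: one must establish that, under these identifications, the Lefschetz operator $\eta$ on $\P^{d}$ is intertwined with the operator governing the Radon transform, and that $Rad^{0}(\E)$ is again a pure simple reduced object, so that the inductive hypothesis for the hyperplane $\P^{d-1}_Y$ applies. Granting this compatibility, the left-reducedness and left $t$-exactness above force $\eta^{r}$ to be injective on the reduced part; together with the constant case and the reductions of the first paragraph, semi-simplicity then reassembles the general case and finishes the proof.
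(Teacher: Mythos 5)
Your reductions in the first paragraph (to $f=\pi\colon\P^{d}_Y\to Y$ with the standard class, and the duality argument turning ``isomorphism'' into ``monomorphism'') are sound and agree with step 0 of the paper, and your treatment of the constant case via Theorem \ref{IV.1.3} is correct. But the heart of the theorem is exactly the point you defer with ``granting this compatibility'': for a non-constant pure $\E$ you never prove injectivity of $\mathcal{H}^{-r}_t f_{+}(\eta^{r})$, and the mechanism you gesture at (left-reducedness plus left $t$-exactness ``force $\eta^{r}$ to be injective'') is not the one that works. The actual argument for $r=1$ is not an induction on $d$ at all. One sets $\G=p_1^{+}[d](\E)$ on $\P^{d}\times\check{\P}^{d}\times Y$ and $\zeta=\eta_{i,\G}$ for the incidence variety $i\colon H\times Y\hookrightarrow\P^{d}\times\check{\P}^{d}\times Y$; a rational section $s$ of $p_1$ reduces the case $r=1$ to showing that $\mathcal{H}^{-1}_t p_{2+}(\zeta)$ is an isomorphism. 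The key diagram \ref{IV.3.2} then identifies $\mathcal{H}^{-1}_t p_{2+}(\zeta)$ with the composite of the inclusion of the maximal $\widetilde{p}_1$-constant subobject of $Rad^{0}(\E)$ (this is where \ref{Rad-Rad!}, \ref{IV.2.6}, \ref{IV.2.7} and \ref{ThmIII.11.3-4} enter) with the projection onto the maximal $\widetilde{p}_1$-constant quotient (obtained by duality). Since $\E$ is $\iota$-pure, $Rad^{0}(\E)$ is pure, hence semi-simple by \cite[4.3.1]{Abe-Caro-weights}, and in a semi-simple object the maximal constant subobject and the maximal constant quotient coincide as a direct summand, so the composite is an isomorphism. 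Note that semi-simplicity is applied to $Rad^{0}(\E)$ on $\check{\P}^{d}\times Y$, not to $\E$; no reduction to $\E$ simple, and no constant/reduced dichotomy for $\E$ itself, is needed.

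The induction is then on $r$, not on $d$: for $r\ge 2$ one checks that the adjunction $\mathcal{H}^{-r}_t p_{2+}(\G)\to\mathcal{H}^{-r}_t p_{2+}i_{+}i^{+}(\G)$ is an isomorphism (using the triangle \ref{Rad-Rad!} and the left $t$-exactness of $Rad_{!}$), identifies $\widetilde{i}^{+}[-1]s^{+}[-d](\zeta)$ with a Chern class of a relative hyperplane for the restriction of $\E$ to an actual hyperplane of $\P^{d}\times Y$ via Lemma \ref{inv-eta}, and applies the induction hypothesis for $r-1$ to that pure restriction. Your proposed induction on $d$ in the ``reduced'' case would still have to produce the intertwining of $\eta$ with the Radon transform that you explicitly leave unproved; without the diagram above (or an equivalent) the argument does not close.
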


\begin{proof}
We follow the proof of the Hard Leftschetz Theorem of 
 \cite[IV.4.1]{KW-Weilconjecture} which is similar 
 to that of \cite{BBD}.

0. Since the assertion is local on $Y$, one can suppose $Y$ affine and smooth. 
Using the remark of \ref{CherHypplane}, we reduce to the case where $f$ is the projection
$ \P ^{d} _Y \to Y$
and
$\eta = id _{\E} \otimes \eta _{f, K _{\P ^{d} _Y}}$. Then we keep the notation of the section.

1. In this step, we treat the case $r=1$. 
We put $\G = p _{1}  ^{+} [d] (\E)\in F \text{-}\mathrm{Hol}  (\P ^{d}  \times \check{\P}  ^{d} \times Y/K)$. 
Following \ref{def-eta}, 
we get from the closed immersion $i \colon H \times Y \hookrightarrow 
\P ^{d}  \times \check{\P}  ^{d} \times Y$ 
the morphism
$\zeta :=\eta _{i, \G} \colon \G \to \G [2](1)$.

a) 
Let $\Spec k  \hookrightarrow   
\check{\P}  ^{d}$ be a rational section,
$t \colon 
Y
\hookrightarrow
\check{\P}  ^{d} \times Y$
and 
$s \colon 
\P ^{d}  \times Y
\hookrightarrow
\P ^{d}  \times \check{\P}  ^{d} \times Y$
the induced closed immersions. 
Since $s ^{-1} (H \times Y)$ is an hyperplane of  
$\P ^{d}\times Y$, since $s$ is a section of $p _1$, 
using Lemma \ref{inv-eta}
we get 
$s ^{+}[-d](\zeta) = \eta$. 
Since $ p _{2+} p _{1} ^{+} \riso \widetilde{p} _{1} ^{+} \widetilde{p} _{2 +}$
and $ \widetilde{p} _{2+} s ^{+} \riso t ^{+}p _{2 +}$,
since the functor $s ^{+}[-d]$
(resp. $t ^{+}[-d]$) is acyclique for the constant objects with respect to $ p _{1}$
(resp. $\widetilde{p} _1$), we get 
that 
$$t ^{+} [-d]\mathcal{H} _t ^{-1} p _{2+} (\zeta )
\riso 
\mathcal{H} _t ^{-1}  \widetilde{p} _{2+}  s ^{+} [-d](\zeta )
=
\mathcal{H} _t ^{-1}  \widetilde{p} _{2+}  (\eta).$$
Hence, this is enough to check that 
$\mathcal{H} _t ^{-1} p _{2+} (\zeta )
\colon 
\mathcal{H} _t ^{-1} p _{2+} 
(\G)
\to 
\mathcal{H} _t ^{1} p _{2+} (\G)(1)$
is an isomorphism. 
For simplicity, we denote this morphism $\zeta$.

b) Consider the diagram
\begin{equation}
\label{IV.3.2}
\xymatrix @R=0,3cm @C=0,3cm{
{ \mathcal{H} _t ^{d-1} p _{2 +} p _{1} ^{+} (\E) }
\ar@{=}[r] ^-{}
\ar[d] ^-{}
&
{\mathcal{H} _t ^{-1} p _{2+}(\G)} 
\ar[r] ^-{\zeta}
\ar[d] ^-{\mathrm{adj}}
&
{\mathcal{H} _t ^{1} p _{2+} (\G)(1)}
\ar[r] ^-{\sim}
&
{\DD\mathcal{H} _t ^{-1} p _{2+} (\check{\G})(1)} 
\ar@{=}[r] ^-{}
&
{\DD \mathcal{H} _t ^{d-1} p _{2 +} p _{1} ^{+} (\DD (\E)) (1)}
\\
{Rad ^{0} (\E)}
&
{\mathcal{H} _t ^{-1} p _{2+}i _+ i ^{+} (\G)} 
\ar[r] ^-{\sim} _-{\theta _i}
\ar[l] ^-{\sim} 
& 
{\mathcal{H} _t ^{1} p _{2+} i _+ i ^{!} (\G)(1)}  
\ar[u] ^-{\mathrm{adj}}
\ar[r] ^-{\sim}
&
{\DD \mathcal{H} _t ^{-1} p _{2+} i _+ i ^{+} (\check{\G})(1)}  
\ar[u] ^-{\DD (\mathrm{adj})}
&
{\DD Rad ^{0} (\DD (\E))(1),}  
\ar[u] ^-{}
\ar[l] ^-{\sim}
}
\end{equation}
where 
$\check{\G}= p _{1}  ^{+} [d] (\DD (\E)) \in F \text{-}\mathrm{Hol}  (\P ^{d}  \times \check{\P}  ^{d} \times Y/K)$,
the horizontal isomorphisms of the middle square are constructed using the commutativity of the dual functor 
with the functor $p _{1}  ^{+} [d] $ (this is the Poincare duality \ref{dual-smooth-morph}) and with proper push-forwards, 
where the right square is the dual of the left square used for $\DD (\E)$ instead of $\E$,
is commutative. By transitivity of the relative duality isomorphism and by definition of the adjunction morphisms, 
we get the commutativity of the middle square. The commutativity of the other squares of \ref{IV.3.2} are tautological 
(e.g. for the second square, this is the construction \ref{def-eta}). 
Moreover, the second left arrow of the bottom row is indeed an isomorphism because of \ref{KW-II.11.2}.\ref{KW-II.11.2.2}.

c) Since $\pi _{1}$ is smooth of relative dimension $d-1$ and
$\pi _{2}$ is proper, 
we get 
$\DD Rad ^{0} (\DD (\E))
\riso 
Rad ^{0} (\E)$.
From both properties of \ref{Rad!properties},
we get the first exact sequence
\begin{gather}
\label{exseq1}
0\to \mathcal{H} _t ^{d-1} p _{2 +} p _{1} ^{+} (\E)  \to Rad ^{0} (\E) \to Rad ^{0 }_{!} (\E) ,
\\  
\label{exseq2}
\DD Rad ^{0 }_{!} (\DD \E) 
\to 
Rad ^{0} (\E)
\to 
\DD 
\mathcal{H} _t ^{d-1} p _{2 +} p _{1} ^{+} (\DD \E)  
\to 
0,
\end{gather}
the second one is induced by duality. 
By construction, the first morphism of \ref{exseq1} and the last one of \ref{exseq2} are respectively the left vertical arrow and
the right vertical arrow of \ref{IV.3.2}.
Using Lemma \ref{IV.2.7} we check that 
$ Rad ^{0} _{!}(\E) $ 
(resp. $\DD Rad ^{0 }_{!} (\DD \E)$) 
is left (resp. right) reduced with respect to $\widetilde{p} _{1}$, i.e. does not have any nontrivial constant with respect to $\widetilde{p} _{1}$
subobject (resp. quotient). This implies that 
$\mathcal{H} _t ^{d-1} p _{2 +} p _{1} ^{+} (\E) $ 
(resp. $\DD \mathcal{H} _t ^{d-1} p _{2 +} p _{1} ^{+} (\DD \E) $)
is the maximal constant with respect to $\widetilde{p} _{1}$ subobject (resp. quotient) of
$Rad ^{0} (\E) $. 

d) Since $\pi _{1}$ is smooth,
$\pi _{2}$ is proper and $\E$ is $\iota$-pure then so is 
$Rad ^{0} (\E) $. Hence $Rad ^{0} (\E) $ is semi-simple in the category 
$\mathrm{Hol}  (\check{\P} ^{d} \times Y/K)$ (see \cite[4.3.1]{Abe-Caro-weights}).
By considering the diagram \ref{IV.3.2} and using the step 1.c), this implies that
the morphism 
$\zeta
\colon \mathcal{H} _t ^{-1} p _{2+}(\G) 
\to 
\mathcal{H} _t ^{1} p _{2+} (\G)(1)$
is an isomorphism in 
$\mathrm{Hol}  (\check{\P} ^{d} \times Y/K)$.
Since $\zeta$ is also a morphism of 
$F \text{-}\mathrm{Hol}  (\check{\P} ^{d} \times Y/K)$, 
then $\zeta$ is  an isomorphism of $F \text{-}\mathrm{Hol}  (\check{\P} ^{d} \times Y/K)$.

2. We proceed by induction on $r$. Suppose $r \geq 2$. 
We put 
$\widetilde{\G} := i ^{+} (\G) [-1] \riso \pi _{1} ^+ [d-1] (\E)$.
The morphism $\zeta$ induces by pull-back
$\widetilde{\zeta} ^{r-1}:= i ^{+} (\zeta ^{r-1})  [-1] 
\colon 
\widetilde{\G}
\to 
\widetilde{\G}
[2r](r)$. 
Consider the commutative diagram:
\begin{equation}
\label{HLT-IV.4.1}
\xymatrix @R=0,3cm @C=0,3cm{
{\mathcal{H} _t ^{-r} p _{2+}(\G)} 
\ar[r] ^-{\zeta ^{r-1}}
\ar[d] ^-{\mathrm{adj}}
&
{\mathcal{H} _t ^{r-2} p _{2+}(\G)} 
\ar[r] ^-{\zeta}
\ar[d] ^-{\mathrm{adj}}
&
{\mathcal{H} _t ^{r} p _{2+} (\G)(1)}
\ar[r] ^-{\sim}
&
{\DD\mathcal{H} _t ^{-r} p _{2+} (\check{\G})(1)} 
\\
{\mathcal{H} _t ^{-r} p _{2+}i _+ i ^{+} (\G)} 
\ar[r] ^-{\zeta ^{r-1}}
\ar[d] ^-{\sim}
&
{\mathcal{H} _t ^{r-2} p _{2+}i _+ i ^{+} (\G)} 
\ar[r] ^-{\sim} _-{\theta _i}
\ar[d] ^-{\sim} 
& 
{\mathcal{H} _t ^{r} p _{2+} i _+ i ^{!} (\G)(1)}  
\ar[u] ^-{\mathrm{adj}}
\ar[r] ^-{\sim}
&
{\DD \mathcal{H} _t ^{-r} p _{2+} i _+ i ^{+} (\check{\G})(1)}  
\ar[u] ^-{\DD (\mathrm{adj})}
\\
{\mathcal{H} _t ^{-(r-1)} \pi _{2+} (\widetilde{\G})} 
\ar[r] ^-{\widetilde{\zeta} ^{r-1}}
&
{\mathcal{H} _t ^{r-1} \pi _{2+}(\widetilde{\G}),} 
}
\end{equation}
where the middle arrow of the middle row is an isomorphism because of \ref{KW-II.11.2}.\ref{KW-II.11.2.2}.
By considering the long exact sequence induced 
by the exact triangle \ref{Rad-Rad!}, 
since $Rad _!$ is left exact and $r \geq 2$, 
we check that the 
adjunction morphism
$\mathcal{H} _t ^{-r} p _{2+}(\G) \to
\mathcal{H} _t ^{-r} p _{2+}i _+ i ^{+} (\G) $
is an isomorphism.
This implies that the right vertical arrow of \ref{HLT-IV.4.1} is an isomorphism.
To check that the arrow of the bottom of the diagram \ref{HLT-IV.4.1} is an isomorphism, 
it is sufficient to prove that 
$t ^{+} [-d] \mathcal{H} _t ^{-(r-1)} \pi _{2+}(\widetilde{\zeta} ^{r-1})$ is an isomorphism (the rational point $\Spec k  \hookrightarrow   \check{\P}  ^{d}$
can vary).

We put $\widetilde{i} \colon s ^{-1} (H \times Y) \hookrightarrow \P ^{d}\times Y$,
$\widetilde{s}\colon s ^{-1} (H \times Y) \hookrightarrow H \times Y$,
$\widetilde{\pi} _2:= \widetilde{p} _2 \circ \widetilde{s}$.
From Lemma \ref{inv-eta}, since $s ^{+}[-d](\G) \riso \E$, we have
$s ^{+} [-d] (\zeta)= \eta _{\widetilde{i}, \E}$. 
Hence, since $\widetilde{i}$ is a closed immersion induced by an hyperplane of $\P ^{d}\times Y$, 
putting $\widetilde{\E}:= \widetilde{i} ^{+} [-1] (\E)\in F \text{-}\mathrm{Hol}  (s ^{-1}(H)/K)$, 
we remark that $\widetilde{\eta} := \widetilde{i} ^{+} [-1] s ^{+}  [-d] (\zeta)\colon 
\widetilde{\E} \to \widetilde{\E} [2](1)$ is 
a Chern class of a relative hyperplane for the projective morphism $\widetilde{\pi} _2$.
Hence, since $\widetilde{\E}$ is pure, 
by using the induction hypothesis, 
we get that $ \mathcal{H} _t ^{-(r-1)}  \widetilde{\pi} _{2+}  (\widetilde{\eta} ^{r-1})$ is an isomorphism.
Finally,
with the same arguments than in the step 1.a), we check the first isomorphism:
\begin{gather}
\notag
t ^{+} [-d]\mathcal{H} _t ^{-(r-1)} \pi _{2+} (\widetilde{\zeta} ^{r-1})
\riso 
\mathcal{H} _t ^{-(r-1)}  \widetilde{\pi} _{2+}  \widetilde{s} ^{+} [-d](\widetilde{\zeta} ^{r-1})
=
\mathcal{H} _t ^{-(r-1)}  \widetilde{\pi} _{2+}  \widetilde{s} ^{+} [-d]
 i ^{+}  [-1]  (\zeta ^{r-1})
 \\
 \notag
 \riso 
 \mathcal{H} _t ^{-(r-1)}  \widetilde{\pi} _{2+}  \widetilde{i} ^{+} [-1]
 s ^{+}  [-d]  (\zeta ^{r-1})
 \riso
 \mathcal{H} _t ^{-(r-1)}  \widetilde{\pi} _{2+}  (\widetilde{\eta} ^{r-1}),
\end{gather}
which implies that $t ^{+} [-d]\mathcal{H} _t ^{-(r-1)} \pi _{2+} (\widetilde{\zeta} ^{r-1})$ is also an isomorphism.
\end{proof}

\section{The dual Brylinski-Radon and the inversion formula}

We keep the notation of the chapter $2$.

\begin{dfn}
\label{B-R-transf-def}
We define the dual Brylinski-Radon transform
$Rad ^{\vee} \colon F \text{-}D ^\mathrm{b} _\mathrm{hol}  (\check{\P} ^{d} \times Y/K)
\to 
F \text{-}D ^\mathrm{b} _\mathrm{hol}  (\P ^{d} \times Y/K)$ 
by posing, for any $\check{\E} \in F \text{-}D ^\mathrm{b} _\mathrm{hol}  (\check{\P} ^{d} \times Y/K)$,
\begin{equation}
\label{B-R-transf-Rad-dual}
Rad ^{\vee} (\check{\E}) := \pi _{1+} \pi _2 ^{+} (\check{\E}) [d-1].
\end{equation}
\end{dfn}

\begin{lemm}
\label{lem-IV.1.4}
Let $\iota \colon X := ( H \times _{\check{\P} ^{d}} \check{H} )  \times Y\hookrightarrow \P ^{d} \times \check{\P}  ^{d} \times \P ^{d} \times Y$ 
be the canonical embedding,
$p _{13}\colon \P ^{d} \times \check{\P}  ^{d} \times \P ^{d} \times Y \to \P ^{d} \times \P ^{d} \times Y $ be the projection
and $\pi= p _{13} \circ \iota $.
Let $\Delta \colon \P ^{d} \times Y  \hookrightarrow \P ^{d} \times \P ^{d} \times Y$
be the diagonal immersion (and the identity over $Y$).
Let
$ \FF \in F \text{-}D ^\mathrm{b} _\mathrm{hol}  (\P ^{d}  \times \P ^{d} \times Y/K)$.
We have the isomorphism of $F \text{-}D ^\mathrm{b} _\mathrm{hol}  (\P ^{d}  \times \P ^{d} \times Y/K)$ of the form
\begin{equation}
\notag
\Delta _{+} \circ \Delta ^{+} (\FF) [2-2d] ( 1-d)
\bigoplus
\oplus _{i=0} ^{d-2} \FF [-2i](-i)
\riso
\pi _{+} \pi ^{+} (\FF).
\end{equation}

\end{lemm}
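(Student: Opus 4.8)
The plan is to reduce to the constant coefficient and then analyse $\pi$ stratum by stratum along the diagonal, exploiting that $\pi$ is a $\P^{d-2}$-fibration away from the diagonal of $\P^d\times\P^d$ and a $\P^{d-1}$-fibration over it.

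First I would reduce to the case $\FF = K := K_{\P^d\times\P^d\times Y}$. Since $\pi$ is proper, $\pi_+\riso\pi_!$, so the projection formula (\cite[A.6]{Abe-Caro-weights}) gives $\pi_+\pi^+\FF\riso\FF\otimes\pi_+\pi^+K$, and likewise $\Delta_+\Delta^+\FF\riso\FF\otimes\Delta_+\Delta^+K$ for the closed immersion $\Delta$; moreover the Chern-class maps used below are compatible with $-\otimes\FF$ by \ref{empt-def-eta-otimes}. Hence it suffices to produce the isomorphism for $\FF=K$ and then tensor by $\FF$. Next, write $\eta$ for the relative hyperplane class of $\pi$ obtained by pulling back through $\iota$ the hyperplane class of the $\check{\P}^d$-factor (in the sense of \ref{CherHypplane} and \ref{pre-eta-smooth-proj}); this is legitimate because, reordering factors, $\iota$ embeds $X$ into a relative $\check{\P}^d$ over $\P^d\times\P^d\times Y$ with $\pi$ the projection forgetting $\check{\P}^d$. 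Let $j\colon V\hookrightarrow\P^d\times\P^d\times Y$ be the open complement of the image of $\Delta$ (codimension $d$). Over $V$, $\pi$ restricts to a $\P^{d-2}$-fibration $\pi_V$ (hyperplanes through two distinct points) and over $\Delta$ to a $\P^{d-1}$-fibration $\pi_\Delta$ (hyperplanes through one point); by \ref{inv-eta}, \ref{eta-smooth-proj} and \ref{eq-pre-eta-smooth-proj}, the restrictions of $\eta$ are the relative hyperplane classes of $\pi_V$ and of $\pi_\Delta$.

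Then I assemble the Chern powers into the morphism
$$\alpha := \bigoplus_{i=0}^{d-2}\eta^i\colon \bigoplus_{i=0}^{d-2}K[-2i](-i)\longrightarrow\pi_+\pi^+K,$$
using the adjunction abuse of notation of \ref{IV.1.3}. Applying $j^+$ and proper base change $j^+\pi_+\riso\pi_{V+}j_X^+$ (where $j_X$ is the open immersion of $\pi^{-1}(V)$), the map $j^+\alpha$ becomes $\bigoplus_{i=0}^{d-2}\eta_V^i$, which is an isomorphism by Theorem \ref{IV.1.3} applied to $\pi_V$. Consequently $\mathrm{Cone}(\alpha)$ is supported on $\Delta$, so the localization triangle gives $\mathrm{Cone}(\alpha)\riso\Delta_+\mathrm{Cone}(\Delta^+\alpha)$. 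Applying $\Delta^+$ and proper base change $\Delta^+\pi_+\riso\pi_{\Delta+}\iota_X^+$, Theorem \ref{IV.1.3} for $\pi_\Delta$ identifies $\Delta^+\pi_+\pi^+K$ with $\bigoplus_{i=0}^{d-1}\Delta^+K[-2i](-i)$ and $\Delta^+\alpha$ with the split inclusion of the summands $0\le i\le d-2$. Hence $\mathrm{Cone}(\Delta^+\alpha)\riso\Delta^+K[2-2d](1-d)$ and therefore $\mathrm{Cone}(\alpha)\riso\Delta_+\Delta^+K[2-2d](1-d)$.

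It remains to split the resulting triangle $\bigoplus_{i=0}^{d-2}K[-2i](-i)\xrightarrow{\alpha}\pi_+\pi^+K\to\Delta_+\Delta^+K[2-2d](1-d)\xrightarrow{w}\bigoplus_{i=0}^{d-2}K[-2i](-i)[1]$, for which I would prove $w=0$. By the adjunction $(\Delta_+,\Delta^!)$ together with relative Poincar\'e duality \ref{dual-smooth-morph} applied to $\Delta$ as a section of the smooth projection forgetting one $\P^d$-factor (so $\Delta^!K\riso\Delta^+K[-2d](-d)$), the $i$-th component of $w$ lands in $\mathrm{Hom}(K_\Delta,K_\Delta[-2i-1](-i-1))$, which computes $H^{-2i-1}_{\mathrm{rig}}(\P^d\times Y)(-i-1)=0$ since $-2i-1<0$. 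Thus the triangle splits, and combined with $\alpha$ it yields the asserted isomorphism for $K$; tensoring by $\FF$ concludes. The main obstacle is the stratum bookkeeping: checking that the single global class $\eta$ restricts on each stratum to exactly the relative hyperplane class required by Theorem \ref{IV.1.3}, and identifying $\Delta^+\alpha$ as the split inclusion; once this is in place the splitting is a short degree computation.
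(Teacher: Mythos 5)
Your proposal is correct, and its skeleton coincides with the paper's: reduce to $\FF=K$ by the projection formula, build $\oplus_{i=0}^{d-2}\eta^{i}$ from the relative hyperplane class of $\pi$, use Theorem \ref{IV.1.3} for the $\P^{d-2}$-fibration off the diagonal to see the cone is supported on $\Delta$, and use Theorem \ref{IV.1.3} for the $\P^{d-1}$-fibration $\widetilde{\pi}$ together with $\widetilde{\Delta}^{+}(\eta)=\widetilde{\eta}$ (i.e. \ref{eq-pre-eta-smooth-proj}) to identify that cone with $\Delta_{+}\Delta^{+}(K)[2-2d](1-d)$. Where you genuinely diverge is the splitting step. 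The paper first splits the triangle in $D^{\mathrm{b}}_{\mathrm{hol}}$ (without Frobenius) by invoking $\iota$-purity of $\pi_{+}\pi^{+}(\widetilde{K})$ and the semisimplicity theorem \cite[4.3.6]{Abe-Caro-weights}, and only then uses the vanishing of $\mathrm{Hom}$ between $\Delta_{+}\Delta^{+}(\widetilde{K})[2-2d]$ and the summands $\widetilde{K}[-2i]$ (in both directions) to see that the decomposition is compatible with Frobenius. You instead kill the connecting morphism $w$ directly: by the $(\Delta_{!},\Delta^{!})$ adjunction and $\Delta^{!}K\riso\Delta^{+}K[-2d](-d)$ (which is \ref{KW-II.11.2}.\ref{KW-II.11.2.2}, since $K$ is constant for the projection of which $\Delta$ is a section), each component of $w$ lives in $\mathrm{Hom}(K_{\P^{d}\times Y},K_{\P^{d}\times Y}[-2i-1](-i-1))=H^{-2i-1}_{\rig}(\P^{d}\times Y)(-i-1)=0$; since the degree $-2i-2$ group vanishes as well, this gives $w=0$ already in the $F$-category, so the splitting and its Frobenius compatibility come in one stroke. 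Your route buys an argument independent of the theory of weights at this spot (the purity input survives elsewhere in the paper, e.g. in \ref{HLT-IV.4.1}, but is not needed here), at the cost of having to be slightly more careful that the triangle and the identification of its third term are carried out in $F\text{-}D^{\mathrm{b}}_{\mathrm{hol}}$ throughout — which your cone analysis does ensure, since $\Delta^{+}\alpha$ is exhibited as a split monomorphism via the $F$-isomorphism of Theorem \ref{IV.1.3}. The "stratum bookkeeping" you flag as the main obstacle is exactly what the paper disposes of with \ref{eq-pre-eta-smooth-proj} and the transitivity of adjunction, so nothing is missing.
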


\begin{proof}
By using the projection isomorphisms (see  \cite[A.6]{Abe-Caro-weights}),
we can suppose $\FF = K _{\P ^{d} \times \P ^{d} \times Y}$.
Hence, by using some base change theorems (induced by the projection
$Y \to \Spec k$), 
we can suppose $Y = \Spec k$.
We put $\widetilde{K} :=K _{\P ^{d} \times \P ^{d}}$.
Consider the following cartesian squares
\begin{equation}
\xymatrix{
{X} 
\ar@{^{(}->}[r] ^-{\iota}
\ar@{}[dr]|{\square}
\ar@/^0,6cm/[rr] _-{\pi}
& 
{ \P ^{d} \times \check{\P}  ^{d} \times \P ^{d}} 
\ar[r] ^-{p _{13}}
\ar@{}[dr]|{\square}
& 
{ \P ^{d}\times \P ^{d}} 
\\ 
{\widetilde{X}} 
\ar@{^{(}->}[r] ^-{\widetilde{\iota}}
\ar@{^{(}->}[u] ^-{\widetilde{\Delta}}
\ar@/_0,6cm/[rr] ^-{\widetilde{\pi}}
& 
{\P ^{d} \times \check{\P}  ^{d}} 
\ar[r] ^-{p _1}
\ar@{^{(}->}[u] ^-{}
& 
{\P ^{d}.}
\ar@{^{(}->}[u] ^-{\Delta}
}
\end{equation}
Put $\widetilde{\pi}:= p _{1} \circ \widetilde{\iota}$,
 $\eta := \iota ^{+}\eta _{p _{13}, K _{\P ^{d} \times \check{\P}  ^{d}  \times \P ^{d}}}$
and 
 $\widetilde{\eta} := \widetilde{\iota} ^{+}\eta _{p _{1}, K _{\P ^{d} \times \check{\P}  ^{d}  }}$.
From the formula \ref{eq-pre-eta-smooth-proj},
we check 
$\widetilde{\Delta} ^+(\eta )
=
\widetilde{\eta} $.
By using the construction of Theorem \ref{IV.1.3}, we get the morphism
\begin{equation}
\label{lem-IV.1.4-ar1}
\oplus _{i=0} ^{d-2}
\eta ^{i}
\colon
\oplus _{i=0} ^{d-2} \widetilde{K} [-2i](-i)
\to
\pi _{+} \pi ^{+} (\widetilde{K}).
\end{equation}
Since $\pi$ is outside $\Delta (\P ^{d} )$ a $\P ^{d-2}$-fibration, 
from Theorem \ref{IV.1.3}, 
the morphism \ref{lem-IV.1.4-ar1} is an isomorphism outside $\Delta (\P ^{d} )$. 
Hence, a cone of \ref{lem-IV.1.4-ar1} is in the essential image of $\Delta _+$.
Since $\Delta ^{+} \pi _{+}\pi ^{+} (\widetilde{K}) \riso \widetilde{\pi} _{+} \widetilde{\Delta} ^{+} \pi ^{+} (\widetilde{K})
\riso 
 \widetilde{\pi} _{+}  \widetilde{\pi} ^{+} \Delta ^{+}  (\widetilde{K}) $, 
by applying $\Delta ^{+} $ 
to the morphism 
\ref{lem-IV.1.4-ar1}, 
we get 
\begin{equation}
\label{lem-IV.1.4-ar2}
\oplus _{i=0} ^{d-2}
\widetilde{\eta} ^{i}
\colon
\oplus _{i=0} ^{d-2} \Delta ^{+}  \widetilde{K} [-2i](-i)
\to
 \widetilde{\pi} _{+}  \widetilde{\pi} ^{+} \Delta ^{+}  (\widetilde{K}).
\end{equation}
Since $\widetilde{\pi}$ is a $\P ^{d-1}$-fibration, 
from 
Theorem \ref{IV.1.3}, 
we remark that the cone of the morphism
\ref{lem-IV.1.4-ar2} is isomorphic to 
$\Delta ^{+} (\widetilde{K}) [2-2d] ( 1-d)$.
Moreover, using Theorem \ref{IV.1.3} again
we build the morphism
$\Delta ^{+} \pi _{+}\pi ^{+} (\widetilde{K}) \to 
\Delta ^{+} (\widetilde{K}) [2-2d] ( 1-d)$ and then by adjunction
the second morphism of the sequence in $F \text{-}D ^\mathrm{b} _\mathrm{hol}  (\P ^{d}  \times \P ^{d} /K)$:
\begin{equation}
\label{lem-IV.1.4-ar2bis}
\oplus _{i=0} ^{d-2} \widetilde{K} [-2i](-i)
\overset{\ref{lem-IV.1.4-ar1}}{\longrightarrow} 
\pi _{+} \pi ^{+} (\widetilde{K})
\to
\Delta _{+} \circ \Delta ^{+} (\widetilde{K}) [2-2d] ( 1-d).
\end{equation}
Since $\pi _{+} \pi ^{+} (\widetilde{K})$ is $\iota$-pure, then 
$\pi _{+} \pi ^{+} (\widetilde{K})$ is semisimple 
in $D ^\mathrm{b} _\mathrm{hol}  (\P ^{d}  \times \P ^{d} /K)$
(see \cite[4.3.6]{Abe-Caro-weights}).
Then, we get the isomorphism
$\Delta _{+} \circ \Delta ^{+} (\widetilde{K}) [2-2d] ( 1-d)
\bigoplus
\oplus _{i=0} ^{d-2} \widetilde{K} [-2i](-i)
\riso
\pi _{+} \pi ^{+} (\widetilde{K})$
in $D ^\mathrm{b} _\mathrm{hol}  (\P ^{d}  \times \P ^{d} /K)$
which induces the morphisms of \ref{lem-IV.1.4-ar2bis}.
For any $i =0,\dots, d-2$, we have
\begin{gather}
\notag
\bullet \ \mathrm{Hom} _{D ^\mathrm{b} _\mathrm{hol}  (\P ^{d}  \times \P ^{d}/K)}
\left (\Delta _{+} \circ \Delta ^{+} (\widetilde{K}) [2-2d] ,  \widetilde{K} [-2i]  \right )
\riso
\mathrm{Hom} _{D ^\mathrm{b} _\mathrm{hol}  (\P ^{d}  /K)}
\left ( \Delta ^{+} (\widetilde{K}) [2-2d] , \Delta ^{!}  \widetilde{K} [-2i]  \right )
\\
\underset{\ref{KW-II.11.2-part1}}{\riso}
\mathrm{Hom} _{D ^\mathrm{b} _\mathrm{hol}  (\P ^{d}  /K)}
\left ( \Delta ^{!} (\widetilde{K}) [2] , \Delta ^{!}  \widetilde{K} [-2i]  \right )
=0.\\
\bullet \ \mathrm{Hom} _{D ^\mathrm{b} _\mathrm{hol}  (\P ^{d}  \times \P ^{d}/K)}
\left (\widetilde{K} [-2i]  , \Delta _{+} \circ \Delta ^{+} (\widetilde{K}) [2-2d] \right )
\riso 
\mathrm{Hom} _{D ^\mathrm{b} _\mathrm{hol}  (\P ^{d}  /K)}
\left (\Delta ^{+}  \widetilde{K} [-2i] , \Delta ^{+} (\widetilde{K}) [2-2d]\right ) =0.
\end{gather}
Hence, we get the compatibility with Frobenius.
\end{proof}

\begin{prop}
[Radon Inversion Formula]
\label{IV.1.4}
Let 
$\E \in F \text{-}D ^\mathrm{b} _\mathrm{hol}  (\P ^{d} \times Y/K)$.
Then the following formula holds
\begin{equation}
Rad ^{\vee}  \circ Rad (\E) 
\riso 
\E (1-d) \oplus \widetilde{p} _2 ^{+} [d] (\phi (\E)),
\end{equation}
where 
$\phi (\E)
:= 
\oplus _{i=0} ^{d-2} \widetilde{p} _{2+} (\E) [d-2-2i](-i)
$
\end{prop}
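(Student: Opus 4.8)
The plan is to reduce the computation of $Rad ^{\vee} \circ Rad$ to the geometric decomposition already established in Lemma \ref{lem-IV.1.4}, by recognizing the composite as the pull-push through the correspondence obtained by composing the incidence relation with its dual. First I would unwind the definitions to get
\begin{equation}
\notag
Rad ^{\vee} (Rad (\E)) = \pi _{1+} \pi _2 ^{+} \left ( \pi _{2+} \pi _1 ^{+} (\E) [d-1] \right ) [d-1],
\end{equation}
where the inner occurrence of the incidence relation is used as a correspondence from the source $\P ^{d}$ to the middle $\check{\P} ^{d}$, and the outer occurrence as a correspondence from the middle $\check{\P} ^{d}$ to the target $\P ^{d}$. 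Working on the fourfold product $\P ^{d} \times \check{\P} ^{d} \times \P ^{d} \times Y$ of Lemma \ref{lem-IV.1.4}, the fibre product of the two incidence relations over the middle $\check{\P} ^{d}$ is exactly the variety $X = (H \times _{\check{\P} ^{d}} \check{H}) \times Y$ of that lemma. Since the middle projection is proper, proper base change along the cartesian square identifying $X$ as this fibre product turns $\pi _2 ^{+} \pi _{2+}$ into the pull-push through $X$, and I would assemble the four operators into $\beta _{+} \alpha ^{+}(\E) [2d-2]$, where $\alpha, \beta \colon X \to \P ^{d} \times Y$ denote the projections onto the source and target $\P ^{d}$ respectively.

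Next I would factor $\alpha$ and $\beta$ through $\pi = p _{13} \circ \iota \colon X \to \P ^{d} \times \P ^{d} \times Y$. Writing $\mathrm{pr} _1, \mathrm{pr} _2 \colon \P ^{d} \times \P ^{d} \times Y \to \P ^{d} \times Y$ for the two projections, one has $\alpha = \mathrm{pr} _1 \circ \pi$ and $\beta = \mathrm{pr} _2 \circ \pi$, whence
\begin{equation}
\notag
Rad ^{\vee} (Rad (\E)) \riso \mathrm{pr} _{2+} \, \pi _{+} \pi ^{+} \, \mathrm{pr} _1 ^{+}(\E) [2d-2].
\end{equation}
Now I would apply Lemma \ref{lem-IV.1.4} with $\FF := \mathrm{pr} _1 ^{+}(\E)$, which yields
\begin{equation}
\notag
\pi _{+} \pi ^{+}( \mathrm{pr} _1 ^{+} \E) \riso \Delta _{+} \Delta ^{+}( \mathrm{pr} _1 ^{+}\E) [2-2d] (1-d) \oplus \bigoplus _{i=0} ^{d-2} \mathrm{pr} _1 ^{+}\E [-2i](-i).
\end{equation}

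Finally I would apply $\mathrm{pr} _{2+}$ and simplify each summand. Since $\mathrm{pr} _1 \circ \Delta = \mathrm{pr} _2 \circ \Delta = \mathrm{id}$, the diagonal summand becomes $\mathrm{pr} _{2+} \Delta _{+} \Delta ^{+} \mathrm{pr} _1 ^{+}(\E) \riso \E$, contributing $\E [2-2d](1-d) [2d-2] = \E (1-d)$ after the global shift. For the constant summands, proper base change along the cartesian square expressing $\P ^{d} \times \P ^{d} \times Y$ as a fibre product over $Y$ gives $\mathrm{pr} _{2+} \, \mathrm{pr} _1 ^{+}(\E) \riso \widetilde{p} _2 ^{+} \widetilde{p} _{2+}(\E)$, so that
\begin{equation}
\notag
\bigoplus _{i=0} ^{d-2} \mathrm{pr} _{2+}\left ( \mathrm{pr} _1 ^{+}\E [-2i](-i) \right ) [2d-2] \riso \bigoplus _{i=0} ^{d-2} \widetilde{p} _2 ^{+} \widetilde{p} _{2+}(\E) [2d-2-2i](-i) = \widetilde{p} _2 ^{+}[d](\phi (\E)),
\end{equation}
which is the asserted second term; the Frobenius compatibility is inherited from Lemma \ref{lem-IV.1.4}, whose proof already records the relevant $\mathrm{Hom}$-vanishing. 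The main obstacle I anticipate is bookkeeping: one must keep the three distinct $\P ^{d}$ and $\check{\P} ^{d}$ factors straight, verify that the squares invoked are genuinely cartesian so that proper base change applies, and track the shifts and Tate twists carefully so that the final identification with $\E (1-d) \oplus \widetilde{p} _2 ^{+}[d](\phi (\E))$ comes out on the nose.
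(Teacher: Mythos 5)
Your proposal is correct and takes essentially the same route as the paper's own proof: base change over the cartesian square defining $X=(H\times Y)\times _{\check{\P} ^{d}\times Y}(\check{H}\times Y)$ to rewrite $Rad ^{\vee}\circ Rad (\E)$ as $\mathrm{pr}_{2+}\,\pi _{+}\pi ^{+}\,\mathrm{pr}_{1}^{+}(\E)[2d-2]$, then Lemma \ref{lem-IV.1.4} applied to $\mathrm{pr}_{1}^{+}(\E)$, and a final base change $\mathrm{pr}_{2+}\,\mathrm{pr}_{1}^{+}\riso \widetilde{p}_{2}^{+}\widetilde{p}_{2+}$. Your bookkeeping of shifts and twists matches the paper's computation exactly.
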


\begin{proof}
With the notation of \ref{lem-IV.1.4}, let respectively 
$u,v 
\colon 
\P ^{d} \times \P ^{d} \times Y \to \P ^{d} \times Y$
be the left and middle projection. 
Then, by using the base change theorem (more precisely, look at the cartesian square
defining the fibered product
$X = (H \times Y)\times _{\check{\P} ^{d}\times Y} (\check{H} \times Y) $)
we get
$Rad ^{\vee}  \circ Rad (\E) 
\riso 
v _{+} \pi _{+} \pi ^{+} u ^{+} (\E)  
[2d-2]$.
Hence we obtain: 
\begin{gather}
\notag
Rad ^{\vee}  \circ Rad (\E) 
\underset{\ref{lem-IV.1.4}}{\riso}
v _+ \Delta _{+} \circ \Delta ^{+} (u ^{+} (\E))( 1-d)
\bigoplus
\oplus _{i=0} ^{d-2}v _+ u ^{+} (\E) [2d -2i-2](-i)
\\
\riso
\E ( 1-d)
\bigoplus
\oplus _{i=0} ^{d-2}
\widetilde{p} _2 ^{+} [d] \widetilde{p} _{2+} (\E) [d -2i-2](-i).
\end{gather}

\end{proof}

\bibliographystyle{smfalpha}

\providecommand{\bysame}{\leavevmode ---\ }
\providecommand{\og}{``}
\providecommand{\fg}{''}
\providecommand{\smfandname}{et}
\providecommand{\smfedsname}{\'eds.}
\providecommand{\smfedname}{\'ed.}
\providecommand{\smfmastersthesisname}{M\'emoire}
\providecommand{\smfphdthesisname}{Th\`ese}

\bigskip
\noindent Daniel Caro\\
Laboratoire de Mathématiques Nicolas Oresme\\
Université de Caen
Campus 2\\
14032 Caen Cedex\\
France.\\
email: daniel.caro@unicaen.fr

\end{document}